\documentclass[11pt]{amsart}
\usepackage{amsfonts,amssymb,amsmath,amsthm}
\usepackage{url}
\usepackage{enumerate}
\usepackage[all]{xy}

\usepackage{amsmath,amsfonts,amsthm,url,color,amssymb}
\usepackage{graphicx}
\usepackage{algpseudocode, algorithm}
\usepackage{hyperref}
\usepackage{todonotes}

\usepackage[norefs,nocites]{refcheck}

\newcommand{\G}{\Gamma}
\newcommand{\la}{\Lambda}
\newcommand{\C}{\mathcal C}
\newcommand{\Z}{\mathbb Z}
\newcommand{\ou}{\mathcal O}
\newcommand{\ord}{\mathrm{ord}}

\newcommand{\s}{\mathbb S}
\newcommand{\T}{\mathbb T}

\newtheorem{theorem}{Theorem}[section]
\newtheorem{lemma}[theorem]{Lemma}

\newtheorem{corollary}[theorem]{Corollary}
\newtheorem{proposition}[theorem]{Proposition}
\theoremstyle{definition}
\newtheorem{definition}[theorem]{Definition}
\newtheorem{example}[theorem]{Example}
\newtheorem{problem}[theorem]{Problem}
\theoremstyle{remark}
\usepackage{amssymb}
\newtheorem{remark}[theorem]{Remark}
\usepackage{enumerate}
\author{Arthur Fernandes}
\address{Departamento de Matem\'{a}tica, Universidade Federal de Minas Gerais, UFMG, Belo Horizonte, MG, 30270-901, Brazil.}

\email{arthurfpapa@gmail.com}
\curraddr{Universidade de São Paulo, Instituto de Ciências Matemáticas e de Computação,
São Carlos, SP 13566-590, Brazil}

\author{Claudio Qureshi}
\address{Instituto de Matemática y Estadística Rafael Laguardia, Facultad de Ingeniería, Universidad de la República, Montevideo 11300,
    Uruguay}
\email{cqureshi@gmail.com}

\author{Lucas Reis}
\address{Departamento de Matem\'{a}tica, Universidade Federal de Minas Gerais, UFMG, Belo Horizonte, MG, 30270-901, Brazil}
\curraddr{}
\email{lucasreismat@mat.ufmg.br}

\author{Sávio Ribas}
\address{Departamento de Matemática, Universidade Federal de Ouro Preto, Ouro Preto, MG, 35400-000,
   Brazil}
\curraddr{}
\email{savio.ribas@ufop.edu.br}

\keywords{group invariants, power graph, p-groups}
\subjclass[2020]{Primary 05C25,Secondary 20D60, 20D15}
\begin{document}

\title{On a class of combinatorial group invariants}

\maketitle

\begin{abstract}
In this paper, we explore group invariants arising from combinatorial structures associated with finite groups, including the {\em functional graphs} of power maps and the well-studied {\em power graphs}. These invariants induce equivalence relations (and hence partitions) on the set of isomorphism classes of finite groups, which we classify from the finest to the coarsest. Surprisingly, all but three of these partitions turn out to coincide; for the subclass of nilpotent groups, all but two coincide. Furthermore, we introduce a broad class of nilpotent groups and show that, within this class, all but one of these partitions agree. Our proofs draw on tools and ideas from Combinatorics and Number Theory, while requiring only basic notions from Group Theory. In particular, we construct a general framework that may prove useful in contexts similar to those considered in this paper. Finally, we propose some open questions that emerge from our results.
\end{abstract}


\section{Introduction} 
\label{intro}

A key task in Group Theory is classifying groups up to isomorphism. Properties that stay the same under isomorphism, called {\em group invariants}, are important tools for this process. These invariants, such as order, exponent, abelianity and nilpotency, and the structure of its subgroups, capture key features that are independent of a particular representation of a group. Studying these invariants is important not only for distinguishing non-isomorphic groups~\cite{BroGa} but also for understanding related algebraic structures like group algebras~\cite{Gade}, where the invariants of the group affect the properties of the algebra. Most notably, group invariants are heavily used to treat the {\em Modular Isomorphism Problem}~\cite{Margo}, which explores the isomorphism of group algebras $kG$ with $G$ a finite $p$-group and $k$ a finite field of characteristic $p$.

In this paper we are interested in certain group invariants arising from combinatorial constructions, some of them being studied in past works. Our main goal is to compare these invariants through the equivalence relations they induce on the set of finite groups. Before moving to our results, we briefly introduce these invariants and position them in the literature.

\subsection*{The order multiset}
For a finite group $G$ and $g\in G$, let $\ord(g)$ be the order of $g$. 

\begin{definition}
    For a finite group $G$, we define $\ou(G)$ as the multiset $\{\mathrm{ord}(g) : g\in G\}$ comprising the order of the elements of $G$.
\end{definition}
 This invariant can give a lot of information on the group $G$, including whether the group is nilpotent~\cite{FeRe}. In~\cite{MDB}, groups $G, H$ with $\ou(G)=\ou(H)$ are called {\em conformal}. In~\cite{vas} groups $G, H$ with $\ou(G)=\ou(H)$ as simple sets (multiplicites not counted) are called {\em isospectral}. According to~\cite{McHa}, if $G$ and $H$ are abelian groups and $\ou(G)=\ou(H)$, then $G$ and $H$ are isomorphic. Also, according to~\cite{GS92}, if $G, H$ are groups of order $n$ with $\ou(G)=\ou(H)$, then the natural actions of the symmetric group $S_n$ on the cosets of $S_n/K$ with $K=G, H$ have the same permutation character. Actions with the same permutation character appear in Group Theory, Combinatorics, and Arithmetic Geometry, as many arguments depend only on conjugacy classes or counting fixed points, and not on the precise form of the action.
 More recently, further aspects of $\ou(G)$ are explored in~\cite{CaDe}.

\subsection*{The ``cyclic subgroup semi-lattice" of a group}
The set $S$ of all subgroups of a group $G$ has a poset structure with respect to the inclusion. In particular, every two elements in $S$ have an infimum (their intersection) and a supremum (the subgroup generated by them). In other words, $S$ is a {\em lattice}. This is the well studied {\em subgroup lattice} of a group $G$~\cite{Sch,Su,Susu}. Given a finite group $G$, let $\mathcal C_G$ be the set of its cyclic subgroups. Of course $\mathcal C_G$ is a poset with respect to the inclusion, so one could consider the ``cyclic sublattice". However, unless $G$ is cyclic, we only get a {\em semi-lattice} (every two elements have an infimum but not always have a supremum). It will be more convenient to give a digraph representation of this poset. 

\begin{definition}
    For a group $G$, let $\mathcal C_G$ be the set of its cyclic subgroups. We define $\la(G)$ as the {\em vertex-weighted} digraph whose vertices are elements of $\mathcal C_G$  (each weighted by the order of the corresponding cyclic group) and directed edges $C_1\to C_2$ if $C_2\subsetneq C_1$.
\end{definition}

We observe that $\la(G)$ is weakly connected (every element in $\mathcal C_G\setminus \{1_G\}$ is connected to $\{1_G\}$). Although we do not know any past work on this object, we refer to \cite{GaLi,SoZh,Tar1,TaLa} for papers exploring the proportion $\frac{\# \C_{G}}{\# G}$.

\subsection*{The power graph}
We have the following definition.

\begin{definition}
The {\em power graph} of a group $G$ is the graph ${\G}(G)$ with vertex set $G$ such that $g, h\in G$ are connected if $g\ne h$ and 
$g=h^k$ or $h=g^k$ for some integer $k$. 
\end{definition}
The power graph of (finite and infinite) groups has been extensively explored throughout the past years, from both algebraic and combinatorial points of view~\cite{Ca,CaGh,CGS,KeQu,KSCC,MiSc,MRS}. We also have a directed version $\vec{\Gamma}(G)$ of this graph, called the {\em power digraph} of $G$. In the latter the directed edges are $g\to g^k$ with $g\in G$ and $k\in \Z$ such that $g\ne g^k$. According to~\cite{Ca}, for a finite group $G$, the isomorphism class of $\vec{\G}(G)$ is uniquely determined by the one of $\G(G)$.  We recommend~\cite{KSCC} for a survey on power graphs and digraphs, and \cite{Ca,CaGh} for further connections between $\Gamma(G)$ and $\ou(G)$.

\subsection*{Digraphs of power maps}
Given a finite set $S$ and a function $f:S\to S$ we can associate to the pair $(S, f)$ a digraph with vertex set $S$ and edge set $s\to f(s)$. This is the {\em functional graph} of $f$ over $S$. 
The functional graph of many maps over finite algebraic structures have been considered in the past years~\cite{Lili,MPQ,PR18,QP18,Sh,Ug18,VS04}. When $S=G$ if a finite group, it is natural to consider the power maps $g\mapsto g^t$.

\begin{definition}
Given a finite group $G$ and $t\in \Z$, let $\G_t(G)$ be the directed graph with vertex set $G$ and edge set $\{g\to g^t\,:\, g\in G\}$.     
\end{definition}
The digraphs of power maps over finite groups have been explored in the past decade, with the main goal of describing $\G_t(G)$ explicitly. Recent works cover the cases where $G$ is cyclic~\cite{QuPa}, abelian~\cite{Bors,QuRe}, and also some classes of non-abelian groups~\cite{Ah,AhMo,DeZh}. In particular, for abelian groups, the digraphs $\G_t(G)$ present a remarkable symmetry: each connected component of $\G_t(G)$ comprises a cyclic graph whose vertices are the roots of pairwise isomorphic trees. This symmetry also arises in digraphs of special maps over other finite algebraic structures, many of which can be explained through a unified approach given in~\cite{QR19}. More recently, it was proved in~\cite{FeRe} that the graph $\G_t(G)$ exhibits this symmetry for every $t\in \Z$ if and only if the finite group $G$ is nilpotent.

It is direct to verify that the objects introduced above are group invariants (with the convention that for the case of graphs, equality means isomorphism). We have the following definition.

\begin{definition}
Let $G$ and $H$ be finite groups. We define the following relations.

\begin{enumerate}
    \item $G\sim_1 H$ if $G\cong H$ (the ordinary isomorphism of groups).
    \item $G\sim_2 H$ if there exists a bijection $\varphi:G\to H$ such that $\varphi(g^t)=\varphi(g)^t$ for every $g\in G$ and $t\in \Z$.
    \item $G\sim_3 H$ if $\la(G)\cong \la(H)$ as vertex-weighted digraphs.
    \item $G\sim_4 H$ if ${\G}(G)\cong {\G}(H)$.
    \item $G\sim_5 H$ if ${\G}_t(G)\cong {\G}_t(H)$ for every $t\in \Z$.
    \item $G\sim_6 H$ if $\ou(G)=\ou(H)$. 
\end{enumerate} 
\end{definition}

\begin{remark}\label{rem:P2-P5}
    We observe that the relations $\sim_j$ for $j=2,5$ are closely related. More precisely, the condition $G\sim_5 H$ asserts that, for each $t\in \Z$, there exists a bijection $f_t:G\to H$ such that $f_t(g^t)=f_t(g)^t$ for every $g\in G$. In contrast, $G\sim_2 H$ requires the existence of a single bijection $f:G\to H$ for which $f(g^t)=f(g)^t$ holds for all $g\in G$ and all $t\in \Z$, that is, one may take $f_t=f$ for every $t\in \Z$.
\end{remark}

We easily check that each $\sim_j$ is an equivalence relation, hence they induce partitions $P_j$ of the set of isomorphism classes of finite groups. Less trivially, those relations interact nicely with {\em direct products}: see Theorems~\ref{thm:direct} and~~\ref{thm:direct2} for more details. It is then natural to compare those relations. The relation $\sim_j$ is {\em finer} than $\sim_k$ if $G\sim_j H$ implies $G\sim_k H$, that is, if the partition $P_j$ is finer than $P_k$. In this case, we write $P_j\le P_k$. In particular, $\{P_i:1\le i\le 6\}$ has a poset structure. We easily check that each $\sim_j$ is invariant by isomorphisms, hence $P_1\le P_j$ for every $2\le j\le 6$. We have few known results, including the inequalities $P_1<P_4\le P_6$ (see~\cite{Ca,CaGh}) and $P_1<P_5\le P_6$ (see~\cite{FeRe}). Moreover, the validity of $P_4\ne P_5$ is explicitly posed as a problem in~\cite{lar}. With this notation in mind, our main result is the following theorem.

\begin{theorem}\label{thm:main}
For the class of all finite groups, we have
$$P_1<P_2=P_3=P_4\leq P_5<P_6.$$
\end{theorem}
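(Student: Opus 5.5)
We establish the chain in five pieces: the equalities $P_2=P_3=P_4$, the inequality $P_4\le P_5$, the inequality $P_5\le P_6$, and the two strict inequalities $P_1\ne P_2$ and $P_5\ne P_6$.

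\textbf{The equalities $P_2=P_3=P_4$.} The organizing observation is that all three relations are governed by the cyclic subgroup structure. For $P_2\le P_3$, take a bijection $\varphi$ with $\varphi(g^t)=\varphi(g)^t$ for all $t$. Since $\langle g\rangle=\{g^t\}$, it maps $\langle g\rangle$ onto $\langle \varphi(g)\rangle$. It also preserves orders, because $g^t=1$ if and only if $\varphi(g)^t=\varphi(1)=1$; note that $\varphi(1)=\varphi(g^0)=1$. Hence $\varphi$ induces an isomorphism of weighted digraphs $\la(G)\cong\la(H)$.

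For $P_3\le P_2$, we rebuild the group's power structure from $\la(G)$. A cyclic subgroup $C$ of order $n$ has $\phi(n)$ generators. Fix an isomorphism $\psi:\la(G)\to\la(H)$ and choose generators $g_C$ of each $C$ and $h_{\psi(C)}$ of $\psi(C)$. The natural candidate is $\varphi(g_C^k)=h_{\psi(C)}^k$ for $\gcd(k,n)=1$. The problem is compatibility: the generators chosen for the subgroups of $C$ must be powers of $g_C$ consistently.

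To fix this, we define $\varphi$ by induction downward from the maximal cyclic subgroups. Equivalently, for each $C$ we choose an isomorphism $C\to\psi(C)$ that agrees with the isomorphisms already chosen on subgroups of $C$. Two points need checking.
\begin{itemize}
\item Each $\mathrm{Aut}(C)\to\mathrm{Aut}(D)$ for $D\le C$ is surjective. This holds because $(\Z/n)^\times\to(\Z/d)^\times$ is onto.
\item When $D$ lies in several maximal cyclic subgroups, the choices must agree. Here I would process a single "generator-level" bijection instead. Define $\varphi$ on the generator set of each $C$, proceeding from small subgroups upward. For $C$ with maximal proper subgroups $D_1,\dots$, we need an isomorphism $C\to\psi(C)$ extending the given ones on each $D_i$. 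These given maps are compatible, since they were built compatibly on the intersections, and $C$ is cyclic, so $C\cong\prod_p C_p$. Extensions therefore exist, by surjectivity applied prime by prime.
\end{itemize}
This bottom-up construction is the main obstacle. The key point to verify is that an isomorphism of the subgroup lattice of a cyclic group, already fixed on all proper subgroups, extends to $C$. Then $\varphi(g^t)=\varphi(g)^t$ holds because $\varphi$ restricts to a group isomorphism on every cyclic subgroup.

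For $P_3\le P_4$ and $P_4\le P_3$, use the closed-twin classes of $\G(G)$, meaning the classes of vertices with the same closed neighbourhood. By Cameron's result cited above, $\G(G)$ determines $\vec\G(G)$. In $\vec\G(G)$, the classes of mutually adjacent vertices are exactly the generator sets of the cyclic subgroups, of size $\phi(n)$, and reachability gives containment. Recovering the weights $n$ from the class sizes and neighbourhoods, as in~\cite{Ca}, yields $\la(G)$. Conversely, $\la(G)$ determines $\G(G)$: blow up each vertex $C$ into $\phi(|C|)$ mutually adjacent vertices and join comparable ones.

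\textbf{The inequalities $P_4\le P_5\le P_6$.} Using $P_2=P_4$, a power-compatible bijection $\varphi$ directly gives $\G_t(G)\cong\G_t(H)$ for every $t$, so $P_2\le P_5$. The inequality $P_5\le P_6$ is~\cite{FeRe}.

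\textbf{Strictness.} For $P_1<P_2$, take the abelian group $\Z_p^3$ and the Heisenberg group of order $p^3$, $p$ odd. Both have exponent $p$, so $\la$ in each case is a star on $(p^3-1)/(p-1)$ subgroups of order $p$ centred at the trivial subgroup; hence they are $\sim_3$ but not isomorphic. For $P_5<P_6$, I would exhibit groups of equal order multiset but distinct $\G_t$ for some $t$. A candidate pair is $\Z_4\times\Z_4$ against a nonabelian group of order 16 with the same order statistics, such as $\Z_4\rtimes\Z_4$. To separate them, count squares, that is, the in-degree structure of $\G_2$: the number of square roots of the identity is the same, so compare in-degrees at elements of order 2. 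If this pair fails, I would search the groups of order $p^3$ and $p^4$ using $\G_p$.
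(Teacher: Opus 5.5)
Your proposal is correct and follows essentially the paper's route. The bottom-up, CRT-style choice of compatible isomorphisms $C\to\psi(C)$ is the same mechanism as the paper's compatible systems of generators, and $P_4\le P_3$ goes through Cameron's theorem exactly as in the paper; your direct proofs of $P_2\le P_3$ and $P_3\le P_4$ only reorder the cycle of implications. For $P_5<P_6$ you use $\Z_4\rtimes\Z_4$ instead of the paper's $Q_8\times\Z_2$, and it does work, so your fallback search is unnecessary. Both $\Z_4\rtimes\Z_4$ and $\Z_4\times\Z_4$ have one element of order $1$, three of order $2$ and twelve of order $4$. In $\langle a,b\mid a^4=b^4=1,\ bab^{-1}=a^{-1}\rangle$ the only squares are $1,a^2,b^2$, so $a^2b^2$ has in-degree $0$ in $\G_2$, whereas all four elements of order dividing $2$ in $\Z_4\times\Z_4$ are squares.
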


If we restrict ourselves to nilpotent groups, we have the following result.
\begin{theorem}\label{thm:newmain}
For the class of finite nilpotent groups, we have
$$P_1<P_2=P_3=P_4=P_5<P_6.$$
\end{theorem}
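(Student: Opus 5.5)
Given Theorem~\ref{thm:main}, all of $P_1<P_2=P_3=P_4\le P_5<P_6$ already holds for every finite group. It also holds for nilpotent groups, provided the witnesses of the strict inequalities can be taken nilpotent. So the plan has two parts. First, show $P_5\le P_2$ on nilpotent groups, which gives $P_2=P_3=P_4=P_5$. Second, check that the examples separating $P_1$ from $P_2$ and $P_5$ from $P_6$ can be chosen among $p$-groups. For the strictness, I would take a pair of non-isomorphic $p$-groups of the same order and exponent in which every power map behaves identically. A natural candidate is $\Z_p^3$ versus the Heisenberg group of order $p^3$ for odd $p$: both have exponent $p$, and every nontrivial element generates a subgroup of order $p$, so $\la$, $\G$ and $\G_t$ agree. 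For $P_5<P_6$, I would use a pair of $p$-groups with the same order statistics but different power-map digraphs. For instance, groups of order $p^4$ with the same numbers of elements of each order but a different number of square roots (or $p$-th roots) for some element. Equivalently, I would use a pair already exhibited in the proof of Theorem~\ref{thm:main}, after checking that it is nilpotent.

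The main work is $P_5\le P_2$ for nilpotent $G,H$. First I reduce to $p$-groups. Suppose $G\sim_5 H$. Then $\ou(G)=\ou(H)$, so $|G|=|H|$. Writing $G=\prod_p G_p$ and $H=\prod_p H_p$, the $p$-power elements correspond. For $t$ coprime to $p$ and $\equiv 0$ modulo the other primes' parts of the exponent, the map $g\mapsto g^t$ projects onto the Sylow $p$-subgroup up to an automorphism. From this I would extract $G_p\sim_5 H_p$: taking $t$ with $t\equiv 0 \pmod{|G|/|G_p|}$ and $t\equiv 1\pmod{|G_p|}$, the image of $\G_t(G)$ is exactly $G_p$ with loops, and the tree structure above it encodes $\G_s(G_p)$ for $t\equiv s\pmod{|G_p|}$. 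Then the direct-product compatibility result (Theorem~\ref{thm:direct}) glues the bijections, since $\sim_2$ is compatible with direct products.

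Now $G,H$ are $p$-groups with $\G_t(G)\cong\G_t(H)$ for all $t$. I would show $\la(G)\cong\la(H)$ and then invoke $P_3=P_2$ from Theorem~\ref{thm:main}. In a $p$-group, $x^t$ for $p\nmid t$ generates the same cyclic subgroup as $x$, so only $t=p^k u$ matters. Moreover, $\G_p(G)$ determines the relation "$C_2$ is the $p$-th power subgroup of $C_1$". Concretely, the map $x\mapsto x^p$ sends the generators of a cyclic subgroup $C$ of order $p^{m}$ onto the generators of its unique maximal subgroup. The containment poset of cyclic subgroups of a $p$-group is a forest under this covering (every cyclic $p$-group is uniserial), so $\la(G)$ is the transitive closure of the ``maximal subgroup'' tree. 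I would recover this tree from $\G_p(G)$ by grouping vertices into classes of generators. This grouping is the main obstacle, because $\G_p(G)$ alone does not see which elements generate the same subgroup. My first attempt is to quotient $\G_p(G)$ by $\varphi(p^m)$-sized blocks. In a $p$-group, the number of generators of $C$ depends only on $|C|$, which is the depth of the vertex toward the fixed point $1$. So the preimage tree of $\G_p$, with vertices at height $m$ counted and divided by $\varphi(p^m)$, encodes the number of cyclic subgroups of order $p^{m}$ lying over a given cyclic subgroup of order $p^{m-1}$. I expect an inductive rooted-tree isomorphism argument on heights to upgrade this counting to a weighted-digraph isomorphism of $\la$. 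If that fails, I would fall back on using $\G_{p^k u}$ for several $u$ simultaneously.
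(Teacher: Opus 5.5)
\textbf{The gap.} The proof breaks at the step you yourself flag as the main obstacle, and the proposal does not close it. For a $p$-group $G$, $\la(G)$ is the transitive closure of the rooted tree $\mathcal T(G)$ of cyclic subgroups (the children of $C$ are the $C'$ with $(C')^p=C$), weighted by $p^{\mathrm{depth}}$. So what must be proved is that the isomorphism type of $\G_p(G)$ determines that of $\mathcal T(G)$. Your counting does not do this:
\begin{itemize}
\item Vertex counts by height only give the number of cyclic subgroups of each order, which is information already in $\ou(G)$.
\item Locally, a vertex $y$ of order $p^{m-1}\ge p$ has in-degree $p$ times (not $\varphi(p^m)$ times) the number of children of $\langle y\rangle$. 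Dividing by $\varphi(p^m)$ is only correct after summing over all generators of $\langle y\rangle$, and that presupposes the grouping into generator classes which $\G_p(G)$ does not display.
\item Even with correct child counts, you cannot assemble $\mathcal T(G)$ without knowing which preimages of a vertex lie in a common cyclic subgroup. Distinct cyclic subgroups can carry isomorphic subtrees, so the graph does not tell you.
\end{itemize}
Your fallback to $\G_{p^k u}$ for several $u$ is not developed either. The paper handles exactly this point with a compatible system of generators for $\C_G^{(p)}$ and the bad-pair lemma. By flipping isomorphic sibling subtrees, it produces an isomorphism $\G_p(\C_G^{(p)})\to\G_p(\C_H^{(p)})$ that carries the compatible system to generators of pairwise distinct cyclic subgroups. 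This induces an injective weight-preserving homomorphism $\la(\C_G^{(p)})\to\la(\C_H^{(p)})$, and by symmetry an isomorphism.

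\textbf{A way to complete your induction.} Your inductive idea can be made to work, more directly than the paper's argument, but only with an ingredient missing from the proposal. Suppose $\ord(x)=p^m\ge p$ and $z^p=x$. The elements of $\langle z\rangle$ whose $p$-th power is $x$ are exactly the $p$ elements $z^k$ with $1\le k\le p^{m+1}$ and $k\equiv 1\pmod{p^m}$. Moreover, $w\mapsto w^k$ is an automorphism of $\G_p(G)$ carrying $T_z$ onto $T_{z^k}$. Hence, in the multiset of isomorphism types of the subtrees $T_z$ with $z^p=x$, every multiplicity is divisible by $p$. At the root ($z\ne 1_G$) it is divisible by $p-1$, since each subgroup of order $p$ contributes its $p-1$ generators. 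Dividing the multiplicities gives one type per child of $\langle x\rangle$ in $\mathcal T(G)$. Induction on height then shows that $\mathcal T(G)$ is a function of the isomorphism type of $\G_p(G)$, without ever identifying the blocks.

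\textbf{Smaller points.}
\begin{itemize}
\item Your congruences in the Sylow reduction are garbled: with $t\equiv 1\pmod{\# G_p}$, the graph $\G_t(G)$ says nothing about $\G_s(G_p)$. Take instead $t\equiv s\pmod{\# G_p}$ and $t\equiv 1$ modulo $\# G/\# G_p$, as in Theorem~\ref{thm:direct2}, using $\# G_p=\# H_p$ from $\ou(G)=\ou(H)$.
\item For $P_5<P_6$ your fallback is fine, since the paper's pair $\Z_4\times\Z_4$, $Q_8\times\Z_2$ consists of $2$-groups. The order-$p^4$ pair you sketch is never specified.
\end{itemize}
The rest of your architecture is sound: reduce to Sylow subgroups, recover $\la$, apply $P_3=P_2$, and glue with Theorem~\ref{thm:direct}. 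The paper instead works inside $G$ directly, via the component of $1_G$ in $\G_p(G)$ and Proposition~\ref{prop:prime}.
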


We obtain the following corollary, which will be further used without mention. 

\begin{corollary}\label{cor:main}
   If $1\le j\le 6$ and $G\sim_j H$, then $\# G=\# H$. 
\end{corollary}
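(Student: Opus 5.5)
By Theorem~\ref{thm:main}, for every $1\le j\le 6$ we have $P_j\le P_6$. Hence $G\sim_j H$ implies $G\sim_6 H$, that is, $\ou(G)=\ou(H)$. So it suffices to treat $j=6$. Since $\ou(G)$ is a multiset with one entry for each $g\in G$, its total cardinality (counted with multiplicity) is $\# G$. Equality of multisets therefore gives $\# G=\#\ou(G)=\#\ou(H)=\# H$.

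For the reader who prefers not to route everything through Theorem~\ref{thm:main}, the direct argument for each relation is also easy:
\begin{itemize}
\item For $j=1$ and $j=2$, there is a bijection $G\to H$ by definition.
\item For $j=4$ and $j=5$, isomorphic graphs have equal vertex sets in cardinality, and the vertex set of each graph is the whole group (take any single $t$, say $t=1$, for $j=5$).
\item For $j=3$, the vertex-weighted digraph $\la(G)$ determines $\# G$. Each element $g$ lies in exactly one cyclic subgroup as a generator, namely $\langle g\rangle$, and a cyclic subgroup of order $m$ has $\varphi(m)$ generators. Hence
$$\# G=\sum_{C\in \C_G}\varphi(\# C),$$
which depends only on the weights of the vertices of $\la(G)$. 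So $\la(G)\cong \la(H)$ as weighted digraphs forces $\# G=\# H$.
\end{itemize}

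The only step needing any thought is the case $j=3$, handled by the generator count above. The rest is immediate from the definitions or from the ordering of the partitions in Theorem~\ref{thm:main}.
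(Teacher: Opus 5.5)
Your first paragraph is exactly the paper's proof: apply Theorem~\ref{thm:main} to get $G\sim_6 H$, then note that $\ou(K)$ has $\# K$ entries counted with multiplicity. Your case-by-case argument is a genuinely different, self-contained route. Each case is correct. In particular, the identity $\# G=\sum_{C\in\C_G}\varphi(\# C)$ for $j=3$ depends only on the vertex weights of $\la(G)$, so it is preserved by weighted-digraph isomorphisms.

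The direct route does more than save effort. It avoids the lemmas behind Theorem~\ref{thm:main} and the external result $P_5\le P_6$. It also matters for the paper's own logic. The step $P_3\le P_2$ in Lemma~\ref{lem:lat} asserts $\# G=\# H$ from $G\sim_3 H$ in order to upgrade the surjection $\Psi$ to a bijection. That step is part of the proof of Theorem~\ref{thm:main}, so justifying it by this corollary would be circular. Your weight-sum identity supplies a non-circular justification. Another fix is to construct the surjection in both directions.

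The route through Theorem~\ref{thm:main} is shorter once the theorem is established. Your direct route lets the corollary serve as an ingredient of that theorem rather than a consequence of it.
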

\begin{proof}
 If    $G\sim_j H$ for some $1\le j\le 6$, Theorem~\ref{thm:main} entails that $G\sim _6 H$, that is, $\ou(G)=\ou(H)$. However, for every finite group $K$, the multiset $\ou(K)$ has exactly $\# K$ elements and the result follows.
\end{proof}

Given Theorems~\ref{thm:main} and~\ref{thm:newmain}, it is natural to ask how the inequalities $P_1< P_2$ and $P_5< P_6$ are affected when one restricts to a special class of finite groups. For instance, according to~\cite{McHa}, we have $P_6\le P_1$ for the class of abelian groups, hence for these groups Theorem~\ref{thm:main} yields the equalities
$$P_1=P_2=P_3=P_4=P_5=P_6.$$

The proof of \( P_5 \ne P_6 \) in the Theorem~\ref{thm:main} is obtained through an explicit example, namely a pair of groups of order $16$ that are equivalent under \( \sim_6 \) but not under \( \sim_5 \). However, when trying to obtain further classes of examples, we found \( p \)-groups of order $p^{p+1}$ with more intricate structures: see Remark~\ref{rem:example} for more details. This phenomenon motivated a deeper investigation into \( p \)-groups, leading us to introduce the concept of \emph{$\alpha$-groups}. This is a class of finite groups $G$, closely related to abelian groups when one considers the power maps $g\mapsto g^{p^i}$ over the set of $p$-elements of $G$, for each prime divisor $p$ of $\# G$: see Definition~\ref{def:alpha}. In particular, this is a large class of finite groups, closed under direct products, that contains the well studied {\em regular $p$-groups}: see Section~\ref{sec:alpha} for more details. For the nilpotent groups in this class, we have the following result. 

\begin{theorem}\label{thm:main-alpha}
    For the class of nilpotent $\alpha$-groups, we have 
    $$P_1<P_2=P_6.$$
\end{theorem}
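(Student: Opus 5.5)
By Theorem~\ref{thm:main} we already have $P_1\le P_2\le P_6$ for all finite groups, so two things remain. First, $P_6\le P_2$ must hold on the class of nilpotent $\alpha$-groups. Second, the inequality $P_1<P_2$ must stay strict inside this class. For the strictness I would use a concrete pair. Take $p$ odd, let $H$ be the Heisenberg group of order $p^3$ and exponent $p$, and let $A=(\Z/p\Z)^3$. Both are regular $p$-groups, hence $\alpha$-groups (Section~\ref{sec:alpha}). They are nilpotent and non-isomorphic, and every nontrivial element of each has order $p$. Their cyclic subgroup digraphs $\la(H)$ and $\la(A)$ are therefore both a root of weight $1$ with $p^2+p+1$ leaves of weight $p$. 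Hence $H\sim_3 A$, so $H\sim_2 A$ by Theorem~\ref{thm:main}. Alternatively one can build the bijection directly: choose a generator in each cyclic subgroup and send $g^k\mapsto g'^k$.

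For $P_6\le P_2$ I would first reduce to $p$-groups. A nilpotent group is the direct product of its Sylow subgroups, and $\ou$ splits accordingly. The $p$-part of $\ou(G)$ is the multiset of orders of the $p$-elements, and this equals $\ou(G_p)$. So $\ou(G)=\ou(H)$ forces $\ou(G_p)=\ou(H_p)$ for every $p$. Moreover $G_p,H_p$ are again $\alpha$-groups, since the definition is phrased prime by prime on $p$-elements. By Theorem~\ref{thm:direct}, $\sim_2$ is compatible with direct products, so it suffices to show that $G_p\sim_2 H_p$. By $P_2=P_3$ from Theorem~\ref{thm:main}, it is enough to prove $\la(G_p)\cong\la(H_p)$.

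Now let $G$ be a $p$-group. Every cyclic subgroup of order $p^{j+1}$ has a unique maximal subgroup, of order $p^j$. So the Hasse diagram of $\la(G)$ is a rooted tree: the root is $\{1\}$, the nodes at level $j$ are the cyclic subgroups of order $p^j$, and the parent of a node is its maximal subgroup. The full digraph $\la(G)$ is the transitive closure of this tree, so it suffices to determine the weighted tree up to isomorphism. The number $c_j$ of nodes at level $j$ equals the number of elements of order $p^j$ divided by $\varphi(p^j)$, so it can be read off from $\ou(G)$.

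The key input from the $\alpha$-property is the following claim: for $g$ of order $p^j$, the number $r_j$ of $x$ with $x^p=g$ and $\ord(x)=p^{j+1}$ depends only on $j$ (this is the ``abelian-like'' behaviour of $x\mapsto x^{p}$ on $p$-elements). Given this, fix a level-$j$ node $C$ and a generator $g$ of $C$. The children of $C$ correspond to cyclic subgroups $D$ of order $p^{j+1}$ with $D^p=C$. Each such $D$ contains exactly $p$ elements $x$ with $x^p=g$, and all of them generate $D$. Hence every node at level $j$ has exactly $r_j/p$ children. A rooted tree in which all nodes of a given level have the same number of children is determined up to isomorphism by its level sizes. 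These are $c_0,c_1,\dots$, equivalently $b_j=c_{j+1}/c_j$, which depend only on $\ou(G)$. Therefore $\ou(G)=\ou(H)$ gives $\la(G)\cong\la(H)$.

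The main obstacle is extracting exactly this uniform branching statement from Definition~\ref{def:alpha}. If the definition only controls root counts of $x\mapsto x^{p^i}$ over all $p$-elements (say a fibre size depending only on $\ord(g)$ and $i$), I would recover the order-$p^{j+1}$ roots by inclusion–exclusion. The $p$-th roots of $g$ with order at most $p^j$ lie among the elements of order dividing $p^j$, and their number is again controlled level by level using the same uniformity. I would set this up as an induction on $j$.
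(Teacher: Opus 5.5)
\textbf{There is a genuine gap in the $p$-group step.} Your reduction to Sylow subgroups is fine, as are the description of $\la(G)$ for a $p$-group as the closure of a rooted tree and the exponent-$p$ example for $P_1\neq P_2$.

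The gap is your key claim that, for $g$ of order $p^j$, the number $r_j$ of $p$-th roots of $g$ depends only on $j$. Definition~\ref{def:alpha} only says this number is \emph{either $0$ or} $n_1$, and the value $0$ does occur below the top level. Take the abelian (hence $\alpha$-) group $\Z_{p^2}\times\Z_p$. The element $(p,0)$ has $p^2$ $p$-th roots, while $(0,1)$, also of order $p$, has none. So the level-$1$ node $\langle(p,0)\rangle$ has $p$ children, and the other $p$ level-$1$ nodes are leaves.

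Hence ``every node at level $j$ has $r_j/p$ children'' is false. Your inclusion--exclusion fallback cannot repair this, because the fibre size depends on whether $g$ is a $p$-th power, not on $\ord(g)$. The correct local statement is Lemma~\ref{lm3}: each nontrivial non-maximal node has exactly $n_1/p$ children. But once leaves occur at intermediate levels, the level sizes $c_j$ no longer determine the tree. You also need to know how the non-leaves of level $j$ are distributed among their parents. For example, two non-leaf children under one parent, versus one under each of two parents, give non-isomorphic trees with identical level sizes.

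\textbf{The missing idea is to sort nodes by height rather than by level.} Let $S_k$ be the set of cyclic subgroups generated by $p^k$-th powers of $p$-elements, so that $S_{k+1}=S_k\setminus\mu_{S_k}$. By Proposition~\ref{betas}, each $S_k$ is again an $\alpha$-set, and $n_1(S_k)=n_{k+1}/n_k$ (with $n_i=n_i(\C_G^{(p)})$) is determined by $\ou$. Define the height of $C$ as the largest $k$ with $C\in S_k$. Applying Lemma~\ref{lm3} to every $S_k$ shows that a nontrivial node of height $h\ge 1$ has:
\begin{itemize}
\item exactly $(n_1(S_k)-n_1(S_{k+1}))/p$ children of height $k$, for each $k\le h-2$;
\item exactly $n_1(S_{h-1})/p$ children of height $h-1$;
\item no children of larger height.
\end{itemize}
Nodes of height $0$ are leaves, and there is an analogous count at the root. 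A rooted tree whose branching depends only on the type of the parent in this way is determined up to isomorphism by these numbers, and hence by $\ou$.

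The paper packages exactly this as an induction on the exponent in Lemma~\ref{lemabetas}. There, $\la(S)$ is obtained from $\la(S\setminus\mu_S)$ by attaching to each node a number of new leaves. That number depends only on whether the node is the root, a leaf, or an internal node of $S\setminus\mu_S$; see \eqref{ms1} and \eqref{eq:magic}.
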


According to Theorem 7.1(b) in~\cite{GPO}, if \( p > 2 \) is a prime and \( G \) is a \( p \)-group of order \( \le p^p \), then \( G \) is a regular \( p \)-group. In particular, \( P_5 = P_6 \) for the class of \( p \)-groups of order \( \le p^p \). This partly explains why the \( p \)-groups we found with \( P_5 \ne P_6 \) had order \( p^{p+1} \) and exhibited more complex structures.

We end this section with some important notation and the structure of the paper. For a group $G$, we fix the following notation.

\begin{itemize}
    \item $\mathcal C_G$ denotes the set of all cyclic subgroups of $G$.
    \item For each prime $p$, $\mathcal C_G^{(p)}$ denotes the collection of cyclic subgroups of $G$ whose order equals $p^i$ for some $i\ge 0$.

    \item $1_G$ is the identity element of $G$.

    \item For $g\in G$, $\langle g\rangle$ denotes the cyclic subgroup of $G$ generated by $g$.

    \item For $g\in G$, $\ord(g)$ denotes the order of $g$.
\end{itemize}

The paper is organized as follows. In Section 2 we provide some background machinery, which might be of independent interest. In particular, we prove that the relations $\sim_j$ interact nicely with direct products of finite groups. In Section 3 we provide the proof of Theorem~\ref{thm:main}, which is divided in many steps. In Section 4 we prove Theorem \ref{thm:newmain}, concerning the nilpotent case. In Section 5 we introduce and study $\alpha$-groups, and provide the proof of Theorem~\ref{thm:main-alpha}. Finally, in Section 6 we provide concluding remarks and propose some open problems.

\section{Preliminaries}
In many proofs of results in this paper it will be relevant to study the structure of the cyclic subgroups of an arbitrary finite group. In this context, the following definition is crucial.

\begin{definition}
Let $G$ be a finite group and let $\mathcal{F}=\{C_1,C_2,\ldots,C_s\} \subseteq \mathcal{C}_G$ be a nonempty subfamily of cyclic subgroups of $G$. A {\em compatible system of generators for $\mathcal{F}$} is a subset $S=\{g_1,\ldots, g_s\}\subseteq G$ where each $g_i$ generates $C_i$ and $$ g_i^{\frac{\#C_i}{\#(C_i \cap C_j)}} =   g_j^{\frac{\#C_j}{\#(C_i \cap C_j)}} , 1\leq i ,j \leq s.$$ 
We say that $S$ is a {\em compatible system of generators for $G$} if it is a compatible system of generators for the set $\mathcal{C}_G$.
\end{definition}

\begin{example}
    The set $\{(1, 1), (0, 1), (1, 0), (0, 2), (0, 0)\}$ is a compatible system of generators for $\Z_2\times \Z_4$ (written additively).
\end{example}

The following definition is useful.

\begin{definition}
 A nonempty subset $S\subseteq \C_G$ is hereditary if for every $C\in S$ and every subgroup $C_0$ of $C$ we have $C_0\in S$.    
\end{definition}

We have the following result.

\begin{theorem}\label{lem:comp}
    Every finite group has a compatible system of generators.
\end{theorem}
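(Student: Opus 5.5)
Our plan is to build the compatible system of generators one cyclic subgroup at a time, going up through the poset $\mathcal C_G$ ordered by size, and to keep the following invariant: the cyclic subgroups handled so far form a hereditary family, and their chosen generators are compatible. First we note what compatibility means. If $g_i$ generates $C_i$ and $d=\#(C_i\cap C_j)$, then $g_i^{\#C_i/d}$ is a generator of $C_i\cap C_j$, namely the one induced by $g_i$. Hence $S$ is compatible exactly when, for every pair, the generators that $g_i$ and $g_j$ induce on $C_i\cap C_j$ agree. For a hereditary family it is enough to check this on subgroups: if $C_0\subseteq C$ and both lie in the family, we need $g_{C_0}=g_C^{\#C/\#C_0}$. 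Indeed, $C_i\cap C_j$ is cyclic and lies in the family, and both $g_i$ and $g_j$ induce $g_{C_i\cap C_j}$ on it.

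We enumerate $\mathcal C_G=\{C_1,\dots,C_s\}$ so that the orders are nondecreasing. Then each $\mathcal F_k=\{C_1,\dots,C_k\}$ is hereditary, since every proper subgroup of $C_k$ has smaller order and so appears earlier. We induct on $k$, and the base case is $C_1=\{1_G\}$ with generator $1_G$. For the inductive step, let $C=C_k$ have order $n$, and suppose the compatible generators $g_D$ are already fixed for all proper subgroups $D$ of $C$. We need a generator $x$ of $C$ with $x^{n/\#D}=g_D$ for every proper $D\le C$. We also need compatibility with the elements of $\mathcal F_{k-1}$ that do not lie in $C$. This reduces to the case above, because for such an element $C_j$ the intersection $C\cap C_j$ is a proper subgroup of $C$. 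Compatibility of $g_j$ with $g_{C\cap C_j}$ already holds by the inductive hypothesis.

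The main obstacle is the existence of $x$. We fix any generator $h$ of $C$ and identify the generators of $C$ with $(\Z/n\Z)^\times$ via $h^a\leftrightarrow a$. The proper subgroups of $C$ correspond to the divisors $d$ of $n$ with $d<n$. Each constraint becomes a condition $a\equiv a_d \pmod d$, where $g_{D_d}=h^{(n/d)a_d}$ and $a_d\in(\Z/d\Z)^\times$. The given data are mutually consistent: for $d'\mid d$, compatibility inside $\mathcal F_{k-1}$ gives $a_d\equiv a_{d'}\pmod{d'}$. So we must lift a compatible family of units modulo all proper divisors $d$ of $n$ to a unit modulo $n$. We split into two cases.

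If $n=p^e$ is a prime power, all the constraints are implied by the one modulo $p^{e-1}$. Any lift $a$ of $a_{p^{e-1}}$ is still a unit modulo $p^e$ when $e\ge 2$. When $e=1$, the only condition is the trivial one from $d=1$, and we simply take $a=1$.

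If $n$ has at least two prime factors, then $n=\mathrm{lcm}$ of the proper divisors $n/p$, taken over the primes $p\mid n$. The system of congruences $a\equiv a_{n/p}\pmod{n/p}$ is compatible on overlaps: $\gcd(n/p,n/q)=n/(pq)$, and there both data reduce to $a_{n/(pq)}$. By the generalized Chinese remainder theorem it has a unique solution modulo $n$. That solution is a unit, because every prime dividing $n$ divides some $n/p$. It also satisfies all the remaining constraints, since every proper divisor divides some $n/p$.

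With $x=h^a$ the induction closes. At $k=s$ we obtain a compatible system of generators for $\mathcal C_G$.
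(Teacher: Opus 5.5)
Your proof is correct and follows essentially the same route as the paper. In both, a hereditary family is grown by adjoining a minimal missing cyclic subgroup: the paper phrases this as a maximal-family contradiction, you as an induction along an enumeration by order. The generator of the new subgroup is then chosen by lifting from the unique maximal subgroup in the prime-power case, and by the Chinese Remainder Theorem over the subgroups of index $p$ (using $\gcd(n/p,n/q)=n/(pq)$) otherwise. Your preliminary reduction of compatibility to nested pairs is just a cleaner packaging of the paper's final check that $C\cap H$ lies in some maximal subgroup $C_i$ of $C$.
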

\begin{proof}
The result is obvious if $\# G=1$ or $\# G$ is a prime number, so we can assume $\# \mathcal{C}_G >2$. Consider the set $\Theta$ of all hereditary non-empty subsets $\mathcal{F}\subseteq \mathcal{C}_G$ such that $\mathcal{F}$ admits a compatible system of generators $S_\mathcal{F}$. Clearly $\Theta$ is nonempty since $\{\{1_{G}\}\}\in \Theta$. Among of these sets we take $\mathcal{F}$ maximal (regarding inclusion) and we will prove that $\mathcal{F}=\mathcal{C}_G$. By the sake of contradiction, let us suppose that there exists $C \in \mathcal{C}_G$ such that $C \not\in \mathcal{F}$. Among of these cyclic subgroups we take $C$ minimal (regarding inclusion). The minimality of $C$ guarantees that the family $\mathcal{F}'=\mathcal{F}\cup \{C\}$ is also hereditary. To lead a contradiction it suffices to prove that $\mathcal{F}'$ admits a compatible system of generators. Note that $\#C>1$ since $\{1_G\} \in \mathcal{F}$ and $\#C$ is not a prime number, otherwise $\mathcal{F}'$ admits the compatible system of generators $S_{\mathcal{F}}\cup \{g\}$ where $g$ is any generator of $C$ and $\mathcal{F}'\in \Theta$, which contradicts the maximality of $\mathcal{F}$. Set $n=\# C$, let $C_1,C_2,\ldots,C_t$ be the maximal proper subgroups of $C$ and $n_i=\#C_i$. Observe that the numbers $p_i:=[C:C_i]=\frac{n}{n_i}, 1\le i\le t,$ are pairwise distinct primes. By the minimality of $C$ we have $C_i\in \mathcal{F}$ for $1\leq i \leq t$. Let $g_0$ be any generator of $C$ and $g_i\in S_{\mathcal{F}}$ be the distinguished generator of $C_i$ for $1\leq i \leq t$. Since $g_i$ and $g_0^{p_i}$ are two generators of $C_i$, there exist integers $h_i$ with $\gcd(h_i,n_i)=1$ such that $(g_0^{p_i})^{h_i}=g_0^{h_ip_i}=g_i$ for $1\leq i \leq t$. There are two cases to consider:

\begin{enumerate}[(i)]
    \item $t=1$. In this case, $\#C>p_1$, $p_1\mid n_1$ and $\gcd(h_1,n)=1$, thus $\gamma:=g_0^{h_1}$ is a generator of $C$ and ${\gamma}^{p_1}=g_1$. We claim that $S_{\mathcal{F}}\cup \{\gamma\}$ is a compatible system of generators for $\mathcal{F}'$. Indeed, let  $H \in \mathcal{F}$ and $h  \in S_{\mathcal{F}}$ be the distinguished generator of $H$. We observe that $H \cap C \ne C$ (because $\mathcal{F}$ is hereditary and $C\not\in \mathcal F$), hence $H\cap C \subseteq C_1$ . In particular, $H\cap C = H \cap C_1$ and we obtain:
\[ \gamma^{\frac{\# C}{\#(C \cap H)}} = (\gamma^{p_1})^{\frac{\# C_1}{\#(C_1\cap H)}} = g_1^{\frac{\# C_1}{\#(C_1\cap H)}}=h^{\frac{\# H}{\# (C_1 \cap H)}}=h^{\frac{\# H}{\# (C \cap H)}}.\]
This proves $\mathcal{F}'\in \Theta$, contradicting the maximality of $\mathcal{F}$.

\item $t>1$. In this case,  we consider the system of congruences: \begin{equation}\label{eq:chinese}
    X\equiv h_i \pmod{n_i}\,, 1\leq i \leq t.
\end{equation}
By the Chinese Remainder Theorem, this system has a solution modulo $n=\operatorname{lcm}(n_1,\ldots, n_t)$ if and only if $h_i \equiv h_j \pmod{\gcd(n_i,n_j)}$ for every $1\leq i <j\leq t$. Since 
$\frac{\# C_i}{\# (C_i \cap C_j)} = \frac{n/p_i}{n/p_ip_j}=\frac{p_ip_j}{p_i}=p_j$,
we obtain $g_i^{p_j}=g_j^{p_i}$. Therefore, $g_0^{h_ip_ip_j}=g_i^{p_j}=g_j^{p_i}=g_0^{h_jp_ip_j}$ and so $h_ip_ip_j\equiv h_j p_ip_j \pmod{n}$. The latter is equivalent to $h_i\equiv h_j \pmod{\frac{n}{p_ip_j}}$. Since the primes $p_i$ are distinct, we obtain $\gcd(n_i, n_j)=\frac{n}{p_ip_j}$ and so the conditions in the Chinese Remainder Theorem are fulfilled. Thus, the system of equations in~\eqref{eq:chinese} has a solution. From hypothesis, $\gcd(n_i, h_i)=1$. Therefore, fixing a solution $P$ to this system of equations, $P$ is relatively prime with $n=\operatorname{lcm}(n_1,\ldots, n_t)$.  
In particular, $\gamma:=g_0^{P}$ is a generator of $C$. Moreover, as $Pp_i\equiv h_ip_i\pmod {n}$ for every $1\le i\le t$, we obtain  
\begin{equation}\label{eq:comp}
    \gamma^{p_i}=g_0^{P\cdot p_i}=g_0^{h_ip_i}=g_i,\;  1\leq i \leq t. 
\end{equation}

Now given $H\in \mathcal{F}$, we argue as in the previous item and obtain $C\cap H\ne C$. Therefore, $C\cap H=C_i\cap H$ for some $1\le i\le t$. 
In this case, Eq.~\eqref{eq:comp} gives

 $$\gamma^{\frac{\# C}{\#(C \cap H)}}=(\gamma^{p_i})^{\frac{\# C_i}{\#(C_i \cap H)}} =(g_i)^{\frac{\# C_i}{\#(C_i \cap H)}}=h^{\frac{\# H}{\# (C_i \cap H)}}=h^{\frac{\# H}{\# (C \cap H)}}$$ where $h\in S_{\mathcal{F}}$ is the distinguished generator of $H$. Thus $\mathcal{F}'\in \Theta$, contradicting the maximality of $\mathcal{F}$. Therefore, we proved $\mathcal{F}=\mathcal{C}_G$, hence $\mathcal{C}_G$ admits a compatible system of generators.  
\end{enumerate}
\end{proof}

If $S$ is a compatible system of generators for $G$ and $\mathcal F\subseteq \C_G$ is nonempty, then each element $C\in \mathcal F$ is generated by some $g\in S$. Therefore, we directly obtain the following result.

\begin{corollary}\label{cor:comp}
For every finite group $G$ and each nonempty set $\mathcal F\subseteq \C_G$, there exists a compatible system of generators for $\mathcal F$.
\end{corollary}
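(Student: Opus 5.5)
The plan is to restrict a global compatible system of generators to $\mathcal F$. By Theorem~\ref{lem:comp}, the whole family $\mathcal C_G$ admits a compatible system of generators $S=\{g_C : C\in\mathcal C_G\}$. Here each $g_C$ generates $C$, and for all $C,D\in\mathcal C_G$ we have
$$g_C^{\frac{\#C}{\#(C\cap D)}}=g_D^{\frac{\#D}{\#(C\cap D)}}.$$

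Given a nonempty $\mathcal F=\{C_1,\ldots,C_s\}\subseteq\mathcal C_G$, I will set $S_{\mathcal F}=\{g_{C_1},\ldots,g_{C_s}\}\subseteq S$. By construction each $g_{C_i}$ generates $C_i$. Specializing the compatibility condition for $S$ to pairs $C=C_i$, $D=C_j$ gives exactly the defining relation
$$g_{C_i}^{\frac{\#C_i}{\#(C_i\cap C_j)}}=g_{C_j}^{\frac{\#C_j}{\#(C_i\cap C_j)}}$$
for $1\le i,j\le s$. Hence $S_{\mathcal F}$ is a compatible system of generators for $\mathcal F$.

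The only point that needs any care is bookkeeping. A compatible system for $G$ assigns one distinguished generator to each cyclic subgroup, and distinct subgroups receive distinct generators, since a generator determines its subgroup. So choosing the generators indexed by $\mathcal F$ is well defined, and the compatibility relations are inherited verbatim. I do not expect any real obstacle: the condition is pairwise, so it passes to any subfamily.
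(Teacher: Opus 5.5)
Your proposal is correct and is the paper's argument: take the compatible system of generators for $\mathcal C_G$ given by Theorem~\ref{lem:comp} and keep only the generators of the members of $\mathcal F$. This works because the compatibility condition is pairwise, so any subfamily inherits it.
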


\subsection{On power closed sets}\label{subsection:power-closed}

Many steps in the proof of Theorem~\ref{thm:main} rely on inductive arguments. In particular, it will be convenient to extend some of our combinatorial objects to special sets that are not groups.

\begin{definition}
   For each nonempty set $A\subseteq \C_{G}$, set $\cup A=\bigcup_{C\in A}C\subseteq G.$

\begin{enumerate}[(i)]
    \item  $\la(A)$ stands for the subgraph of $\la(G)$ induced by $A$;
    \item $\Gamma(A)$ (resp. $\Gamma_t(A)$) denotes the subgraph of $\Gamma(G)$ (resp. $\Gamma_t(G)$) induced by $\cup A$;
    \item $\ou(A)$ stands for the multiset 
    $$\{\ord(g): g\in \cup A\}.$$
\end{enumerate}

\end{definition}

We observe that if $A\subseteq \C_G$, then $\cup A$ is a {\em power closed} set, that is, $g^t\in A$ for every $g\in A$ and $t\in \Z$. It is not hard to see that the converse is also true: any power closed subset of $G$ equals $\cup A$ for some $A\subseteq \C_G$ (in fact, we can always take $A$ hereditary). 

\begin{remark}
    For $A=\C_G$, we have $\cup A=G$, so the combinatorial objects defined above agree with the ones we earlier defined for finite groups. 
\end{remark}

\subsection{Products of groups and graphs}
 For directed graphs $\mathcal G, \mathcal H$ with vertex sets $U$ and $V$, respectively, their {\em tensor product} is the direct graph $\mathcal G\otimes \mathcal H$ with vertex set $U\times V$ and directed edges $(u, v)\to (u', v')$ where $u\to u'$ is an edge of $\mathcal G$ and $v\to v'$ is an edge of $\mathcal H$. 

\begin{remark}\label{rem:tensor}
It follows by the definition of tensor product of graphs that, for groups $K, L$, we have $\Gamma(K\times L)\cong \Gamma_t(K)\otimes \Gamma_t(L)$ for every $t\in \Z$.
\end{remark}

As follows we provide two theorems on the equivalence relations $\sim_i$ through direct product of groups. We will make use of Theorem~\ref{thm:main} to shorten the proofs. However, we emphasize that the proof of Theorem~\ref{thm:main} is completely independent from these two results.

\begin{theorem}\label{thm:direct}
    Let $1\le j\le 6$ and let $G_1, G_2, H_1$ and $H_2$ be finite groups such that $G_i\sim_j H_i$ for $i=1, 2$. Then $G_1\times G_2\sim_j H_1\times H_2$.\end{theorem}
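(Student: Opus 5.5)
By Theorem~\ref{thm:main}, the relations $\sim_2,\sim_3,\sim_4$ coincide. It therefore suffices to treat $j\in\{1,2,5,6\}$: the cases $j=3,4$ reduce to $j=2$.

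The case $j=1$ is trivial. If $\phi_i:G_i\to H_i$ are isomorphisms, then $\phi_1\times\phi_2$ is an isomorphism $G_1\times G_2\to H_1\times H_2$.

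For $j=2$, take bijections $\varphi_i:G_i\to H_i$ with $\varphi_i(g^t)=\varphi_i(g)^t$ for all $g$ and $t$. Define $\varphi(g_1,g_2)=(\varphi_1(g_1),\varphi_2(g_2))$. This is a bijection, and since powers in a direct product are computed componentwise,
$$\varphi((g_1,g_2)^t)=(\varphi_1(g_1^t),\varphi_2(g_2^t))=(\varphi_1(g_1)^t,\varphi_2(g_2)^t)=\varphi(g_1,g_2)^t.$$
Hence $G_1\times G_2\sim_2 H_1\times H_2$, and by Theorem~\ref{thm:main} the cases $j=3,4$ follow.

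For $j=5$, fix $t\in\Z$. The functional graph of $g\mapsto g^t$ on $K\times L$ is exactly the tensor product $\Gamma_t(K)\otimes\Gamma_t(L)$, which is the content of Remark~\ref{rem:tensor} read with $\Gamma_t(K\times L)$ on the left. Indeed, $(k,l)\to(k',l')$ is an edge if and only if $k'=k^t$ and $l'=l^t$. The tensor product of digraphs is functorial under isomorphisms: if $f_{t,i}:\Gamma_t(G_i)\to\Gamma_t(H_i)$ are isomorphisms, then $f_{t,1}\times f_{t,2}$ is an isomorphism of the tensor products. Thus $\Gamma_t(G_1\times G_2)\cong\Gamma_t(H_1\times H_2)$ for every $t$.

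For $j=6$, we have $\ord(g_1,g_2)=\operatorname{lcm}(\ord g_1,\ord g_2)$. Hence $\ou(G_1\times G_2)$ is the multiset $\{\operatorname{lcm}(a,b): a\in\ou(G_1),\, b\in\ou(G_2)\}$, taken with multiplicities over all pairs. This depends only on $\ou(G_1)$ and $\ou(G_2)$. Equivalently, choose multiplicity-preserving bijections $G_i\to H_i$ that preserve orders; their product preserves orders.

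There is no real obstacle here. The only point needing care is that the cases $\sim_3$ and $\sim_4$ are not handled directly; they are routed through $\sim_2$ via Theorem~\ref{thm:main}, which the paper states is proved independently of this result. A direct proof for $\Lambda$ or $\Gamma$ would be harder, because cyclic subgroups of a product are not products of cyclic subgroups.
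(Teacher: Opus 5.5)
Your proof is correct and follows the paper's argument case by case. You route $j=3,4$ through $j=2$ via Theorem~\ref{thm:main}, use the componentwise product bijection for $j=2$, use the tensor-product decomposition of Remark~\ref{rem:tensor} for $j=5$ (which you rightly read as $\Gamma_t(K\times L)$), and for $j=6$ use $\ord(g_1,g_2)=\operatorname{lcm}(\ord g_1,\ord g_2)$, which is exactly the paper's counting formula for the number of elements of each order in $G\times H$.
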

\begin{proof}
We split the proof into cases.
\begin{enumerate}[(i)]
    \item For $j=1$ the result is trivial.
    \item For $2\le j\le 4$, Theorem~\ref{thm:main} entails that it suffices to consider the case $j=2$. Assume $G_i\sim_2 H_i$ for $i=1, 2$ and let $\varphi_i:G_i\to H_i$ be a bijection such that $\varphi_i(g^t)=\varphi_i(g)^t$ for every $g\in G_i$ and $t\in \Z$. Hence, if we set $\varphi:G_1\times G_2\to H_1\times H_2$ with $\varphi(g_1, g_2)=(\varphi_1(g_1), \varphi_2(g_2))$, then $\varphi$ is a bijection and $\varphi(g^t)=\varphi(g)^t$ for every $g\in G_1\times G_2$ and $t\in \Z$. Hence $G_1\times G_2\sim_2 H_1\times H_2$.

    \item For $j=5$, suppose $G_i\sim_5 H_i$ for $i=1, 2$. In particular, for every $t\in \Z$, we have $\Gamma_t(G_i)\cong \Gamma_t(H_i)$. The latter combined with Remark~\ref{rem:tensor} yields 
    $$\Gamma_t(G_1\times G_2)\cong \Gamma_t(G_1)\otimes \Gamma_t(G_2)\cong \Gamma_t(H_1)\otimes \Gamma_t(H_2)\cong \Gamma_t(H_1\times H_2),$$
and so $G_1\times G_2\sim_5 H_1\times H_2$.

    \item Finally for $j=6$, we observe that if for a group $G$ we denote by $n(G, a)$ the number of elements $g\in G$ with order $a$, then $$n(G\times H, a)=\sum_{\mathrm{lcm}(b, c)=a}n(G, b)\cdot n(H, c).$$
In particular, if $\ou(G_i)=\ou(H_i)$, then $n(G_i, a)=n(H_i, a)$ for every $a\in \Z_{>0}$ and so $n(G_1\times G_2, a)=n(H_1\times H_2, a)$ for every $a\in \Z_{>0}$. The latter is equivalent to $\ou(G_1\times G_2)=\ou(H_1\times H_2)$.
\end{enumerate}
\end{proof}

\begin{corollary}
    If $G, H$ and $K$ are finite groups with $G\sim_j H$ for some $1\le j\le 6$, then $G\times K\sim_j H\times K$.
\end{corollary}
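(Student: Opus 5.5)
This corollary follows from Theorem~\ref{thm:direct} applied with $G_1=G$, $H_1=H$ and $G_2=H_2=K$. The only hypothesis left to check is that $K\sim_j K$ for the given $1\le j\le 6$.

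Each $\sim_j$ is reflexive. For $j=1$ this is the identity isomorphism. For $j=2$ it is the identity bijection, which commutes with every power map. For $j=3,4,5$ the relevant graphs $\la(K)$, $\G(K)$ and $\G_t(K)$ are isomorphic to themselves via the identity. For $j=6$ we trivially have $\ou(K)=\ou(K)$. Hence $G_2\sim_j H_2$ holds.

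With $G_1\sim_j H_1$ given by hypothesis, Theorem~\ref{thm:direct} yields $G\times K\sim_j H\times K$. There is no real obstacle here. The only point to note is that the cases $2\le j\le 4$ of Theorem~\ref{thm:direct} use Theorem~\ref{thm:main}, and the paper has already stated that this dependence involves no circularity.

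If one prefers to avoid that dependence when $j\in\{3,4\}$, one can instead construct a bijection $G\times K\to H\times K$ as $\varphi\times \mathrm{id}_K$, where $\varphi$ is a power-map-compatible bijection $G\to H$. This requires first passing from $\sim_3$ or $\sim_4$ to $\sim_2$, which again appeals to Theorem~\ref{thm:main}. So quoting Theorem~\ref{thm:direct} directly is the cleanest route.
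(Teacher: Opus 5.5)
Your proposal is correct and is the same argument the paper intends: the corollary is stated without proof right after Theorem~\ref{thm:direct}, as its specialization to $G_2=H_2=K$, using reflexivity of $\sim_j$. Your closing remark about avoiding the dependence on Theorem~\ref{thm:main} is unnecessary, since, as you note yourself, the alternative route still relies on it.
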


We also provide a weak converse of the previous theorem.

\begin{theorem}\label{thm:direct2}
Let $1\le j\le 6$ and let $G_1, G_2, H_1, H_2$ be finite groups such that $G_1\times G_2\sim_{j}H_1\times H_2$. If $\# G_i=\# H_i$ for $i=1, 2$ and $\gcd(\# G_1, \# G_2)=1$, then $G_i\sim_j H_i$ for $i=1, 2$.     
\end{theorem}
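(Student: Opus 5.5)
Set $G=G_1\times G_2$ and $H=H_1\times H_2$, with $m_i=\# G_i=\# H_i$ and $\gcd(m_1,m_2)=1$. The key observation is that $G_1$ is power-recoverable inside $G$: it is exactly the set $\{g\in G: g^{m_1}=1_G\}$, since $\ord(g_1,g_2)=\mathrm{lcm}(\ord g_1,\ord g_2)$ divides $m_1$ only when $\ord g_2$ divides $\gcd(m_1,m_2)=1$. The same holds for $G_2$ (with $m_2$) and for $H_1,H_2$ inside $H$. The plan is to show that each invariant, restricted to this subset, gives back the invariant of the factor. I treat the relations separately, using Theorem~\ref{thm:main} to reduce $j=2,3,4$ to $j=2$.

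\emph{Case $j=1$.} If $\psi:G\to H$ is an isomorphism, then $\psi$ maps $\{g:g^{m_1}=1\}$ onto $\{h:h^{m_1}=1\}$. Hence $\psi$ restricts to a bijective homomorphism $G_1\to H_1$, and likewise $G_2\to H_2$.

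\emph{Case $j=2$.} Let $\varphi:G\to H$ be a bijection with $\varphi(g^t)=\varphi(g)^t$ for all $g$ and $t$. Then $g^{m_1}=1_G$ if and only if $\varphi(g)^{m_1}=\varphi(1_G)$. Also $\varphi(1_G)=\varphi(g^0)=\varphi(g)^0=1_H$. Thus $\varphi$ maps $G_1$ bijectively onto $H_1$ and still commutes with all powers, so $G_1\sim_2 H_1$; the same argument gives $G_2\sim_2 H_2$.

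\emph{Case $j=6$.} Every order in $G$ factors uniquely as $ab$ with $a\mid m_1$ and $b\mid m_2$. The orders dividing $m_1$ are exactly the orders of elements of $G_1\times\{1\}$. Hence $n(G,a)=n(G_1,a)$ for every $a\mid m_1$, and the same holds in $H$. So $\ou(G)=\ou(H)$ forces $\ou(G_1)=\ou(H_1)$, and likewise $\ou(G_2)=\ou(H_2)$.

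\emph{Case $j=5$, the main obstacle.} Here the bijection $f_t:G\to H$ depends on $t$, so the earlier argument fails as stated. I would first show that the sets $G_1\subseteq G$ and $H_1\subseteq H$ are determined by $\Gamma_t$ alone. For $t$ with $\gcd(t,m_1m_2)=1$, the map $x\mapsto x^t$ is a permutation. Choose $t\equiv 1 \pmod{m_1}$ and $t\equiv 0 \pmod{m_2}$ (possible by CRT, although then $t$ is no longer coprime to $m_2$). Then $g^t=g$ exactly for $g\in G_1$, because the $G_2$-component $g_2$ is sent to $g_2^t=1$. So $G_1$ is the set of loops (fixed points) of $\Gamma_t(G)$.

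The difficulty is that we need $\Gamma_s(G_1)\cong\Gamma_s(H_1)$ for \emph{every} $s$, whereas $f_s$ need not preserve $G_1$. To fix this, given $s$, use CRT to choose $t\equiv s\pmod{\exp G}$ restricted to the $m_1$-part and $t\equiv 0\pmod{m_2}$. Working modulo the exponent $e=e_1e_2$, where $e_i\mid m_i$ are coprime, take $t\equiv s\pmod{e_1}$ and $t\equiv 0\pmod{e_2}$. Then $\Gamma_t(G)=\Gamma_s(G_1)\otimes\Gamma_0(G_2)$, by Remark~\ref{rem:tensor} together with the fact that power maps depend only on $t$ modulo the exponent. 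The same holds in $H$ once we check that $\exp H=\exp G$; this follows from $\ou(G)=\ou(H)$, which in turn follows from $\sim_5\le\sim_6$. The graph $\Gamma_0(G_2)$ is a star of $m_2$ vertices all mapping to a single loop. Its tensor product with $\mathcal{D}=\Gamma_s(G_1)$ is $\mathcal D$ with $m_2-1$ extra copies of every vertex, each copy having in-degree $0$ and a single out-edge to the image of its original. The subgraph induced on vertices of positive in-degree, or more safely the subgraph induced on the image of the map $x\mapsto x^{t}$ iterated so as to account for $\mathcal D$ itself, should recover $\mathcal D$.

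Cleaner still is to take $t\equiv s\pmod{e_1}$ and $t\equiv 1\pmod{e_2}$. Then $\Gamma_t(G)\cong\Gamma_s(G_1)\otimes\Gamma_1(G_2)$, where $\Gamma_1(G_2)$ is the disjoint union of $m_2$ loops, so $\Gamma_t(G)$ is exactly $m_2$ disjoint copies of $\Gamma_s(G_1)$. Likewise $\Gamma_t(H)$ is $m_2$ disjoint copies of $\Gamma_s(H_1)$. Since digraphs have unique decomposition into connected components, cancelling the multiplicity $m_2$ gives $\Gamma_s(G_1)\cong\Gamma_s(H_1)$. This cancellation step is the crux of the $j=5$ case. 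The symmetric choice of $t$ handles $G_2$ and $H_2$.
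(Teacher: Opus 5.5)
Your proof is correct and follows the paper's argument. In each case you restrict the invariant to the part of order dividing $m_1$, i.e.\ $G_1\times\{1\}=\{g: g^{m_1}=1\}$. For $j=2,3,4$ you work with $\sim_2$ rather than the paper's $\sim_3$, which is an inessential difference. For $j=5$ your final choice $t\equiv s$, $t\equiv 1$ is exactly the paper's CRT trick, and your explicit cancellation via unique decomposition into connected components spells out a step the paper leaves implicit. You can drop the detours (the $t\equiv 0$ attempt and the appeal to $\exp G=\exp H$): since $\exp(K_i)\mid m_i$ for $K=G,H$, taking $t\equiv s \pmod{m_1}$ and $t\equiv 1 \pmod{m_2}$ works directly.
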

\begin{proof}
We split the proof into cases.

\begin{enumerate}[(i)]
    \item For $j=1$ just note that, as $\gcd(\# G_1, \# G_2)=1$, the groups $G_1\times\{1_{G_2}\}\cong G_1$ and $H_1\times\{1_{H_2}\}\cong H_1$ are the unique subgroups of $G_1\times G_2$ and $H_1\times H_2$, respectively, of order $\# G_1$. Hence $G_1\cong H_1$. Similarly we obtain $G_2\cong H_2$. 
    
    \item For $2\le j\le 4$, Theorem~\ref{thm:main} entails that it suffices to consider the case $j=3$. Suppose $G_1\times G_2\sim_{3}H_1\times H_2$ an let $F:\mathcal C_{G_1\times G_2}\to \C_{H_1\times H_2}$ be the map inducing the isomorphism between the vertex-weighted digraphs $\Lambda(G_1\times G_2)$ and $\Lambda(H_1\times H_2)$. Since $\gcd(\#G_1, \#G_2)=1$, we can identify $\C_{G_1\times G_2}$ with $\C_{G_1}\times \C_{G_2}$ and, in this case, the elements of $\C_{G_1\times G_2}$ with order relatively prime to $\# G_2$ correspond to the cyclic subgroups of the direct product $G_1\times \{1_{G_2}\}$. A similar result holds for $H_1\times H_2$. In particular, since $F$ preserves inclusions (the edges) and the order of cyclic subgroups (the weight of the vertices), it follows that $F$ restricts to an isomorphism between $\Lambda(G_1\times \{1_{G_2}\})$ and $\Lambda(H_1\times \{1_{H_2}\})$. The latter easily implies $\la(G_1)\cong \la(H_1)$. Similarly we obtain $\la(G_2)\cong \la(H_2)$, concluding the proof for this case.

    \item For the case $j=5$, set $m_i=\# G_i=\# H_i$ and let $t$ be an aribitrary integer. Since $\gcd(m_1, m_2)=1$, the Chinese Remainder Theorem entails the existence of $s=s(t)\in \Z$ such that $s\equiv t\pmod {m_1}$ and $s\equiv 1\pmod {m_2}$. Therefore, $\Gamma_s(K)\cong \Gamma_t(K)$ for $K=G_1, H_1$. Moreover, for $L=G_2, H_2$, the graph $\Gamma_s(L)\cong \Gamma_1(L)$ comprises $m_2$ loops. It is direct to verify that, for a directed graph $\mathcal G$ and a loop $J$, we have $\mathcal G\otimes J\cong \mathcal G$.
Hence $\Gamma_s(G_1)\otimes \Gamma_s(G_2)$ comprises $m_2$ copies of $\Gamma_t(G_1)$. Analogously,  $\Gamma_s(H_1)\otimes \Gamma_s(H_2)$ comprises $m_2$ copies of $\Gamma_t(H_1)$. From hypothesis and Remark~\ref{rem:tensor}, we have the isomorphisms
$$\Gamma_s(G_1)\otimes \Gamma_s(G_2)\cong \Gamma_s(G_1\otimes G_2)\cong \Gamma_s(H_1\otimes H_2)\cong \Gamma_s(H_1)\otimes \Gamma_s(H_2).$$
In conclusion, the graphs $\Gamma_t(G_1)$ and $\Gamma_t(H_1)$ are isomorphic. In a similar way we obtain $\Gamma_t(G_2)\cong \Gamma_t(H_2)$. Since $t$ is arbitrary, we proved $G_i\sim_5 H_i$ for $i=1, 2$.

    \item $j=6$:  For a group $K$ and an integer $a$, let $n(K, a)$ be the number of elements in $K$ of order $a$. If $G_1\times G_2\sim_6 H_1\times H_2$, then $n(G_1\times G_2, a)=n(H_1\times H_2, a)$ for every $a\ge 1$. Let $d$ be any divisor of $\# G_1$, hence $\gcd(d, \# G_2)=1$ and so $n(G_1\times G_2, d)=n(G_1, d)$. Similarly we obtain 
$n(H_1\times H_2, d)=n(H_1, d)$ and so $n(G_1, d)=n(H_1, d)$. Since every element of $\ou(G_1)$ (hence $\ou(H_1)$) is a divisor of $\# G_1$, we obtain $\ou(G_1)=\ou(H_1)$, that is, $G_1\sim_6 H_1$. In a similar way we obtain $G_2\sim_6 H_2$. 
\end{enumerate}    
\end{proof}

The following proposition roughly says that, if $G$ is nilpotent, then $\la(G)$ can factor as simpler digraphs through tensor products.

\begin{proposition}\label{prop:prime}
    For each finite nilpotent group $G$, the isomorphism class of 
    $\la(G)$ (as a weighted digraph) is completely determined by the ones of $\la(\C_G^{(p)})$ with $ p|\# G$.
\end{proposition}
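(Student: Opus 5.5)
We will show that $\la(G)$ can be rebuilt, up to isomorphism of weighted digraphs, from the weighted digraphs $\la(\C_G^{(p)})$ for $p\mid\#G$. Write $\#G=p_1^{a_1}\cdots p_r^{a_r}$. Since $G$ is nilpotent, $G\cong P_1\times\cdots\times P_r$, where $P_i$ is the unique Sylow $p_i$-subgroup. The set $\C_G^{(p_i)}$ is then exactly $\C_{P_i}$: a cyclic subgroup of $p_i$-power order lies in the unique Sylow $p_i$-subgroup, and conversely. So $\la(\C_G^{(p_i)})=\la(P_i)$, and its vertices and weights are intrinsic to the family $\C_G^{(p_i)}$.

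The first step is a bijection $\Phi:\C_{P_1}\times\cdots\times\C_{P_r}\to\C_G$ given by $(C_1,\dots,C_r)\mapsto C_1C_2\cdots C_r$. The product of commuting cyclic subgroups of pairwise coprime orders is cyclic of order $\prod\#C_i$. The inverse sends a cyclic $C$ to its Sylow subgroups $C\cap P_i$, i.e. its $p_i$-primary parts. To check these maps are mutually inverse, decompose a generator $g$ of $C$ as $g=g_1\cdots g_r$ with $g_i$ the $p_i$-part of $g$, a power of $g$. Then $C=\langle g_1\rangle\cdots\langle g_r\rangle$, and $C\cap P_i=\langle g_i\rangle$.

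The second step checks that $\Phi$ respects weights and edges. For weights, $\#\Phi(C_1,\dots,C_r)=\prod_i\#C_i$. For edges, note that $D\subseteq C$ if and only if $D\cap P_i\subseteq C\cap P_i$ for all $i$. The forward direction is trivial. The backward direction holds because $D$ is generated by its primary parts, each of which lies in $C$. Hence $\Phi(D)\subsetneq\Phi(C)$ if and only if $D_i\subseteq C_i$ for all $i$ with at least one inclusion strict.

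So $\la(G)$ is isomorphic to the weighted digraph $\mathcal D$ built from the $\la(P_i)$ alone, defined as follows.
\begin{itemize}
\item The vertex set is $\prod_i V(\la(P_i))$.
\item The weight of a tuple is the product of its weights.
\item There is an edge $(C_i)\to(D_i)$ exactly when, for each $i$, either $C_i=D_i$ or $C_i\to D_i$ is an edge of $\la(P_i)$, and not all $C_i=D_i$.
\end{itemize}
This is the tensor product of the reflexive closures of the $\la(P_i)$ with the diagonal loops removed, so it is a tensor-type factorization as the paper suggests. Any isomorphisms $\la(\C_G^{(p)})\cong\la(\C_H^{(p)})$ for all $p$ then assemble coordinatewise into an isomorphism $\mathcal D_G\cong\mathcal D_H$, hence $\la(G)\cong\la(H)$. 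Here we need $\#G$ and $\#H$ to have the same prime divisors; for $p\nmid\#G$ the factor $\la(\C_G^{(p)})$ is the single vertex $\{1_G\}$, which is harmless.

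The only step requiring care is the bijectivity of $\Phi$ together with the edge criterion, both of which rest on primary decomposition of elements in a nilpotent group. This is routine, since the primary components of $g$ are powers of $g$ and lie in the respective Sylow subgroups.
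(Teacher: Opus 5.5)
Your proposal is correct and follows essentially the same argument as the paper. The paper also identifies each cyclic $C$ with its $p$-primary parts $C^{(p)}$ (your $C\cap P_i$), notes that orders multiply and inclusion is checked componentwise, and concludes that the reflexive closure of $\la(G)$ is the weight-multiplied tensor product of the reflexive closures of the $\la(\C_G^{(p)})$; you simply make the bijection and the edge criterion more explicit via the Sylow decomposition.
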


\begin{proof}
For each $C\in \C_G$, let $C^{(p)}$ be the largest cyclic subgroup of order a power of $p$ contained in $C$. Since $G$ is nilpotent, we easily conclude that

\begin{equation}\label{eq:prod-cyc}
\#C=\prod_{p|\# G}\# C^{(p)}.
\end{equation}
Moreover, for $C, C_0\in \C_G$, we have $C\subseteq C_0$ if and only if $C^{(p)}\subseteq C_0^{(p)}$ for every $p|\# C$. Therefore, if $\overline{\mathcal G}$ denotes the graph obtained by adding a loop at each vertex of a loopless graph $\mathcal G$, we see that  
\begin{equation}\label{eq:tensor-pr}
    \overline{\la(G)}\cong \bigotimes_{p|\# G}\overline{\la(\C_G^{(p)})}.
\end{equation}
Moreover, from Eq.~\eqref{eq:prod-cyc}, these digraphs are also isomorphic as vertex weighted digraphs with the convention that we multiply the weight of the vertices when taking the tensor product $\bigotimes_{p|\# G}\overline{\la(\C_G^{(p)})}$. The proof of the proposition follows by Eq.~\eqref{eq:tensor-pr} and the fact that the isomorphism class of a loopless vertex-weighted digraph $\mathcal G$ is completely determined by the one of $\overline{\mathcal G}$, and vice-versa.    
\end{proof}

\section{Proof of Theorem~\ref{thm:main}}
For the sake of clarity and organization, we break the proof of Theorem~\ref{thm:main} into two lemmas. The inequalities $P_1<P_2$ and $P_5<P_6$ are fairly easy, while the equalities $P_2=P_3=P_4\leq P_5$ are not trivial at all. Whenever relevant, we briefly give insights on the proofs of some steps.

\begin{lemma}\label{lem:examp}
    $P_1<P_3$ and $P_5<P_6$. 
\end{lemma}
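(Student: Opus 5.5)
The relations $P_1\le P_3$ and $P_5\le P_6$ are already known or immediate: every group isomorphism $G\to H$ maps cyclic subgroups to cyclic subgroups of the same order and preserves inclusion, so it induces an isomorphism $\la(G)\cong\la(H)$ of vertex-weighted digraphs. The inequality $P_5\le P_6$ is quoted from~\cite{FeRe} in the introduction. So the plan is to prove the two strict inequalities by explicit pairs of groups.

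\emph{For $P_1\ne P_3$} I will take, for an odd prime $p$, the elementary abelian group $G=\Z_p^3$ and the Heisenberg group $H$ of order $p^3$ and exponent $p$ (upper unitriangular $3\times 3$ matrices over $\mathbb F_p$). They are not isomorphic, since $H$ is non-abelian. In both groups every non-identity element has order $p$. Hence $\C_G$ and $\C_H$ each consist of $\{1\}$ (weight $1$) and exactly $(p^3-1)/(p-1)$ subgroups of order $p$ (weight $p$). Two distinct subgroups of order $p$ intersect trivially, so the only edges go from each order-$p$ subgroup to $\{1\}$. Thus $\la(G)$ and $\la(H)$ are both a weighted in-star with $p^2+p+1$ leaves, and they are isomorphic. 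Alternatively, if I want an example of small fixed order, I will use $p=3$, giving groups of order $27$.

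\emph{For $P_5\ne P_6$} I will use the groups of order $16$
\[
G=\Z_4\times\Z_4 \quad\text{and}\quad H=Q_8\times\Z_2 .
\]
\begin{itemize}
\item \emph{Same order multiset.} $G$ has the identity, $3$ elements of order $2$ and $12$ of order $4$. In $H$ the elements of order dividing $2$ are $(\pm1,0)$ and $(\pm1,1)$, giving the identity and $3$ involutions. The remaining $12$ elements have the form $(x,\epsilon)$ with $x$ of order $4$ in $Q_8$, so they have order $4$. Hence $\ou(G)=\ou(H)$, i.e.\ $G\sim_6 H$.
\item \emph{Different $\G_2$.} In $G$ the squaring map is a homomorphism with image $2\Z_4\times2\Z_4$ of size $4$. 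Every vertex in this image therefore has exactly $4$ preimages (in-degree $4$, counting the loop at the identity). In $H$ every element of order $4$ squares to $(-1,0)$, and the $4$ elements of order dividing $2$ square to the identity. So $(-1,0)$ has in-degree $12$ in $\G_2(H)$, while no vertex of $\G_2(G)$ has in-degree $12$. Therefore $\G_2(G)\not\cong\G_2(H)$ and $G\not\sim_5 H$.
\end{itemize}

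No step is a genuine obstacle once the examples are chosen. The only care needed is the bookkeeping of element orders in $Q_8\times\Z_2$ and the comparison of in-degree sequences for $\G_2$, which is a graph invariant that suffices to separate the two digraphs.
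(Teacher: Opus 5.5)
Your proposal is correct and takes the same approach as the paper's proof. You use the same pair $\Z_p^3$ versus the non-abelian group of order $p^3$ and exponent $p$ for $P_1\ne P_3$, and the same pair $\Z_4\times\Z_4$ versus $Q_8\times\Z_2$ for $P_5\ne P_6$, with the known inequality $P_5\le P_6$ quoted as the paper does. Your in-degree argument makes explicit the ``direct computation'' the paper leaves to the reader: $(-1,0)$ has in-degree $12$ in $\G_2(Q_8\times\Z_2)$, while every in-degree in $\G_2(\Z_4\times\Z_4)$ is $0$ or $4$.
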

\begin{proof}
We prove the inequalities separately.

\begin{enumerate}[(i)]
    \item The inequality $P_1\le P_3$ is trivial, so we just need to verify $P_1\ne P_3$. For the latter we provide infinitely many pairs of non isomorphic groups that are equivalent under $\sim_3$. Fix $p$ an odd prime, set $\mathcal A _p=\Z_p^3$ and let $\mathcal B_p=(\Z_p\times \Z_p)\rtimes \Z_p$ be the unique non abelian group of order $p^3$ and exponent $p$. It is direct to verify that $\Lambda(\mathcal A_p)$ and $\Lambda(\mathcal B_p)$ are isomorphic (in fact, both graphs comprise a directed rooted tree with exactly $\frac{p^3-1}{p-1}=p^2+p+1$ vertices of weight $p$, directed to the root which has weight $1$). Hence $\mathcal A_p\sim_3 \mathcal B_p$. However, such groups are clearly not isomorphic, since only one is abelian. 

\item The inequality $P_5\le P_6$ is Theorem 1.3 in~\cite{FeRe}. To prove that the inequality is strict, it suffices to prove that $P_5\ne P_6$. Let $A=\Z_4\times \Z_4$ and $B=Q_8\times \Z_2$, where $Q_8=\langle a, b\,|\, a^4=1, a^2=b^2, bab^{-1}=a^{-1} \rangle$ is the {\em quaternion group}. It is direct to verify that $\ou(G)=\ou(H)$, that is, $G\sim_6 H$. However, a direct computation reveals that $\Gamma_2(G)$ and $\G_2(H)$ are not isomorphic, hence $G\not\sim_5 H$.
\end{enumerate}    
\end{proof}
As follows, we provide an infinite family of examples of groups that are equivalent under $\sim_6$ but not under $\sim_5$.

\begin{remark}\label{rem:example}
Although the proof of $P_5\ne P_6$ is concluded by a single example using groups of order $16$, we note that there are less trivial examples. Let $p$ be an odd prime, $\mathbb A_p=\Z_{p^2}^2\times\Z^{p-3}_p$ and $$\mathbb B_p:=\left\langle a_1, a_2, \ldots, a_{p-1}, b\right\rangle,$$ where $a_1^{p^2}=1, a_i^p=1$ for $2 \leq i \leq p-1, b^p=a_1^p$ and all generators commute except that $b^{-1} a_i b=a_i a_{i+1}$ when $1 \leq i<p-1$, and $b^{-1} a_{p-1} b=a_{p-1} a_1^{-p}$. We directly verify that $\mathbb A_p$ and $\mathbb B_p$ are of exponent $p^2$, and have $p^{p-1}-1$ elements order $p$. According to Exercise $2.4$ of \cite{IFPG}, $\mathbb B_p$ is also a group of order $p^{p+1}$ and so $\ou(\mathbb A_p)=\ou(\mathbb B_p)$. Moreover, according to the same exercise, $\mathbb B_p$ has exactly $p$ elements of the form $g^p$: these correspond to the vertices in $\Gamma_p(\mathbb B_p)$ with positive indegree. However, it is clear that $\mathbb A_p$ has $p^2$ elements of the form $h^p$, hence $\Gamma_p( \mathbb A_p)$ and $\Gamma_p(\mathbb B_p)$ cannot be isomorphic. In particular, $\mathbb A_p\not\sim_5 \mathbb B_p$.
\end{remark}

We proceed to the proof of $P_4= P_3$ and $P_3 = P_2\le P_5$. For $P_4= P_3$ the idea relies on the fact that a group $G$ can be partitioned by grouping the elements of $G$ that generate the same cyclic subgroup. This partition will correspond to {\em cliques} in $\G(G)$ (that is, sets of vertices that are pairwise connected). Roughly speaking, if we ``contract" these cliques into a single vertex, what we obtain is an undirected copy of $\la(G)$. We can recover the directed graph thanks to the main result in~\cite{Ca} which states that the isomorphism class of $\G(G)$ is uniquely determined by the isomorphism class of its directed version $\vec{\G}(G)$. Moreover, we can reverse this procedure and obtain $\G(G)$ from $\la(G)$. The proof of $P_3\le P_2$ relies on explicitly constructing a map $f:G\to H$ with $f(g^t)=f(g)^t$ from a given isomorphism between $\la(G)$ and $\la(H)$. The construction of such map is a bit technical and heavily relies on the use of compatible systems of generators for groups, earlier introduced in this paper.

\begin{lemma}\label{lem:lat}
    $P_2= P_3=P_4\le P_5$.
\end{lemma}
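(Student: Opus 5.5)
I will prove the chain of implications $\sim_2\Rightarrow\sim_4\Rightarrow\sim_3\Rightarrow\sim_2$, and then $\sim_2\Rightarrow\sim_5$.

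\emph{$P_2\le P_4$ and $P_2\le P_5$.} Let $\varphi:G\to H$ be a bijection with $\varphi(g^t)=\varphi(g)^t$ for all $g$ and $t$. Then $h=g^k$ if and only if $\varphi(h)=\varphi(g)^k$, so $\varphi$ is an isomorphism $\G(G)\cong\G(H)$. For fixed $t$, the same $\varphi$ maps each edge $g\to g^t$ of $\G_t(G)$ to the edge $\varphi(g)\to\varphi(g)^t$ of $\G_t(H)$, so $\G_t(G)\cong\G_t(H)$. Hence $\sim_2$ implies both $\sim_4$ and $\sim_5$.

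\emph{$P_4\le P_3$.} By the result of~\cite{Ca}, $\G(G)\cong\G(H)$ gives an isomorphism $\psi:\vec{\G}(G)\to\vec{\G}(H)$. In $\vec{\G}(G)$ we have mutual edges $g\leftrightarrow h$ exactly when $\langle g\rangle=\langle h\rangle$. So the classes of the equivalence relation ``mutually adjacent or equal'' are precisely the sets of generators of the cyclic subgroups, and $\psi$ permutes these classes. Each class has size $\phi(n)$, where $n$ is the order of the subgroup, so sizes are preserved. This alone does not recover the weight $n$, because $\phi$ is not injective.

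To recover $n$, I use that the number of elements $x$ with $x\to g$ or $x=g$ in $\vec{\G}$ equals $\#\{x: g\in\langle x\rangle\}$, and this does not give the order directly either. Instead, $n=\#\langle g\rangle$ equals $1$ plus the out-degree of $g$ in $\vec{\G}(G)$, since the out-neighbours of $g$ are $\langle g\rangle\setminus\{g\}$. This quantity is preserved by $\psi$. Contracting each class to a vertex, the edge $C_1\to C_2$ (for $C_2\subsetneq C_1$) corresponds to the edges from a generator of $C_1$ to the generators of $C_2$. Therefore $\psi$ induces an isomorphism $\la(G)\cong\la(H)$ of vertex-weighted digraphs.

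\emph{$P_3\le P_2$.} This is the main obstacle. Let $F:\C_G\to\C_H$ be a weighted isomorphism; it preserves inclusion and orders. By Theorem~\ref{lem:comp}, choose compatible systems $\{g_C\}$ for $G$ and $\{h_D\}$ for $H$. Define
$$\varphi(g_C^k)=h_{F(C)}^k \quad\text{for all } k\in\Z.$$
Since $\#F(C)=\#C$, this is a bijection from $C$ onto $F(C)$ for each $C$.

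The key point is that $\varphi$ is well defined on $G$ as a whole. Suppose $x\in C\cap C'$. Then $\langle x\rangle\subseteq C\cap C'$, and $F$ maps $C\cap C'$ to $F(C)\cap F(C')$. This holds because $F$ preserves the poset, and $C\cap C'$ is the largest cyclic subgroup contained in both (the intersection of cyclic subgroups is cyclic). Orders are also preserved, so $\#(C\cap C')=\#(F(C)\cap F(C'))$.

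Compatibility gives $g_C^{a}=g_{C'}^{a'}=:u$, which generates $C\cap C'$, with $a=\#C/\#(C\cap C')$ and $a'=\#C'/\#(C\cap C')$. Likewise $h_{F(C)}^{a}=h_{F(C')}^{a'}=:v$ generates $F(C)\cap F(C')$. Writing $x=u^m$, both candidate definitions of $\varphi(x)$ give $v^m$, so $\varphi$ is consistent on overlaps. Every element lies in some cyclic subgroup, so $\varphi$ is a globally defined map. It is surjective since every $D\in\C_H$ is $F(C)$ for some $C$, and it is injective because it is bijective on each $C$ and consistent on overlaps (using $F^{-1}$ symmetrically). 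Finally, $\varphi(x^t)=\varphi(x)^t$ holds by computing inside $\langle g_C\rangle$ with the exponent formula.

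The delicate point to verify carefully is the claim $F(C\cap C')=F(C)\cap F(C')$ via maximality in the poset together with the weights.
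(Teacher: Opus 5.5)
Your proposal is correct and follows essentially the same route as the paper: the same four implications, Cameron's theorem plus contraction of the generator classes for $P_4\le P_3$, and the map $g_C^k\mapsto h_{F(C)}^k$ built from compatible systems of generators for $P_3\le P_2$. Your explicit justifications usefully fill in steps the paper only asserts: that $F$ preserves meets, so $F(C\cap C')=F(C)\cap F(C')$; that $\#\langle g\rangle$ is one plus the out-degree of $g$ in $\vec{\G}$; and injectivity via the symmetric inverse map rather than the paper's appeal to $\#G=\#H$.
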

\begin{proof}
 We split the proof into four parts: $P_3\le P_2, P_2\le P_4, P_4\le P_3$ and $P_2\le P_5$.
\begin{enumerate}[(i)]

\item $P_3\le P_2$:

Let $G, H$ be two groups such that $G\sim_3 H$, that is,  $\la(G)\cong \la(H)$. Let $\psi:\C_{G}\to \C_{H}$ be any bijection inducing an isomorphism between $\la(G)$ and $\la(H)$ (as vertex-weighted digraphs).  Let $\{g_1, \ldots, g_s\}$ be a compatible system of generators for $G$ (ensured by Theorem~\ref{lem:comp}) and set $C_i=\langle g_i\rangle$. It is clear that the sets $\psi(C_i)$ are the cyclic subgroups of $H$. Moreover, since $\psi$ preserves the weight of the vertices, the orders of $\psi(C_i)$ and $C_i$ coincide. For each $1\le i\le s$, let $h_i$ be a generator of $\psi(C_i)$ in a way that $\{h_1, \ldots, h_s\}$ is a compatible system of generators for $H$ (ensured by Theorem~\ref{lem:comp}). We define $\Psi:G\to H$ by $$\Psi(g_i^t)=h_i^t,$$ 
for every $t\in \Z$ and every $1\le i\le s$. From construction, every element of $G$ is a power of at least one $g_i$. In particular, if we prove that $\Psi$ is a well defined bijection, it will satisfy $\Psi(g^t)=\Psi(g)^t$ for every $g\in G, t\in \Z$ and we obtain $G\sim_2 H$. Let $d_i$ be the order of $C_i$, hence both $g_i$ and $h_i$ have order $d_i$.
\begin{itemize}
    \item $\Psi$ is well defined: if $g_i^a=g_j^b$ for integers $a, b$ and $1\le i, j\le s$, then $g_i^a, g_j^b\in C_i\cap C_j$. Since the elements $g_i$ form a compatible system of generators for $G$ there exists $g_k\in \{g_1, \ldots, g_s\}$, a generator for $C_k:=C_i\cap C_j$, such that 
     $$g_i^{d_i/d_{k}}=g_k=g_j^{d_j/d_k}.$$
Since $g_i^a=g_j^b$ is an element of $C_k$, we have $g_i^a=g_k^l=g_j^b$ for some $l\in \Z$ and then 
$$g_i^a=g_i^{ld_i/d_k}=g_j^{ld_j/d_k}=g_j^b.$$
Since $(g_i, h_i)$ and $(g_j, h_j)$ are pairs of elements with the same order, the last equalities entail that $h_i^a=h_i^{ld_i/d_k}$ and $h_j^b=h_j^{ld_j/d_k}$. Since $\psi$ is an isomorphism between $\la(G)$ and $\la(H)$, we must have $\psi(C_i\cap C_j)=\psi(C_i)\cap \psi(C_j)$. Moreover, the elements $h_i$ comprise a compatible system of generators for $h$, hence $h_i^{d_i/d_k}=h_j^{d_j/d_k}$ and then $h_i^{ld_i/d_k}=h_i^{ld_j/d_k}$. In conclusion, $$\Psi(g_i^a)=h_i^a=h_j^b=\Psi(g_j^b),$$ and so $\Psi$ is well defined.

\item $\Psi$ is a bijection: we have proved that $\Psi$ is well defined, and we observe that every element of $H$ is a power of some $h_i$, hence $\Psi$ is surjective. Since $G\sim_3H$, we obtain $\# G=\# H$. Therefore, $\Psi$ is a surjective map between two finite sets of the same cardinality and so $\Psi$ is also bijective. 
\end{itemize}

\item $P_2\le P_4$:

Let $G, H$ be groups with $G\sim_2 H$ and let $\varphi:G\to H$ be a bijection such that $\varphi(g^t)=\varphi(g)^t$ for every $g\in G$ and $t\in \Z$. The latter implies that, for elements $g, h\in G$, we have $g=h^m$ for some $m\in \Z$ if and only if $\varphi(g)=\varphi(h)^m$. In particular, we see that the following are equivalent:
\begin{itemize}
    \item $g, h\in G$ are connected in $\Gamma(G)$;
    \item $g\ne h$ and $g=h^k$ or $h=g^k$ for some $k\in \Z$;
    \item $\varphi(g)\ne \varphi(h)$ and $\varphi(g)=\varphi(h)^k$ or $\varphi(h)=\varphi(g)^k$ for some $k\in \Z$;
    \item $\varphi(g), \varphi(h)\in H$ are connected in $\Gamma(H)$.
\end{itemize}
Thus $\varphi$ is an isomorphism between $\Gamma(G)$ and $\Gamma(H)$, that is, $G\sim_4 H$.

    

\item $P_4\le P_3$:

Let $G, H$ be groups such that $G\sim_4 H$, that is, $\G(G)\cong \G(H)$. From the main result in~\cite{Ca}, the directed power graphs $\vec{\G}(G)$ and $\vec{\G}(H)$ are also isomorphic. In other words, there exists a bijection $f:G\to H$ such that, for every $g, g_0\in G$, we have $g=g_0^k\ne g_0$ for some $k\in \Z$ if and only if $f(g)=f(g_0)^l\ne f(g_0)$ for some $l\in \Z$. In particular, for each cyclic subgroup $C$ of $G$, its image $f(C)$ is a cyclic subgroup of $H$ and of the same order. 

For each $g\in G$, let $[g]$ be the set of all elements $g'\in G$ such that $g$ and $g'$ are powers of each other. It is clear that $[g]$ comprises the set of $g'\in G$ with $\langle g'\rangle=\langle g\rangle $. 
For each $h\in H$, define $[h]$ in a similar way. Let $C_1, \ldots, C_s$ be the cyclic subgroups of $G$ and let $g_i$ be a generator of $C_i$. Since $f$ is bijective and preserves adjacency, we easily obtain 
$f([g_i])=[f(g_i)]$, $f(g_i)$ generates $f(C_i)$ and $f([g_i])=f([g_j])$ if and only if $[g_i]=[g_j]$. In particular, since $f$ is surjective, the groups $D_i:=f(C_i)$ comprise the cyclic subgroups of $H$. Therefore, the map $\widetilde{f}:\C_{G}\to \C_{H}$ with $\widetilde{f}(C_i)=D_i$ is a bijection. We claim that it induces an isomorphism between $\la(G)$ and $\la(H)$. We have seen that $\widetilde{f}$ preserves the weight of the vertices. Moreover, we observe that the following are equivalent:
\begin{itemize}
    \item $C_i\to C_j$ in $\la(G)$;
\item $C_j\subsetneq C_i$;
\item $g_j$ is a power of $g_i$ but the converse is not true;
\item $f(g_j)$ is a power of $f(g_i)$ but the converse is not true;
\item $D_j\subsetneq D_i$;
\item $D_i\to D_j$ in $\la(H)$.
\end{itemize}
In other words, $\widetilde{f}$ preserves adjacency. Hence $\la(G)\cong \la(H)$ and so $G\sim_3 H$.



\item $P_2\le P_5$: 

This follows directly by Remark~\ref{rem:P2-P5}.

\end{enumerate}
\end{proof}

\section{Proof of Theorem \ref{thm:newmain}}
In this subsection we prove $G\sim_5 H$ implies $G\sim_3 H$ for the class of nilpotent groups. We first give some insights about the proof. We observe that $\la(G)$ basically describes the structure of the cyclic subgroups of $G$ with respect to the inclusion. This is somehow captured by suitable digraphs $\G_t(G)$: if $t$ is a prime dividing the order of $g\in G$, then the edge $g\to g^t$ in $\G_t(G)$ induces the edge $\langle g\rangle \to \langle g^t\rangle$ on $\la(G)$. 
The main difficulty lies in the fact that we cannot trivially guarantee the existence of a bijection $f:G\to H$ that keeps this structure as $t$ varies. The $p$-groups are  perfect prototypes for this approach since, in this case, we only need $t=p$. However, thanks to Proposition~\ref{prop:prime}, we can manage to adapt this idea for arbitrary finite nilpotent groups. Throughout this subsection, $G, H$ are groups such that $G\sim_5 H$ and $p$ is a prime dividing $\# G$ (hence $\# H$). We start with the following result.

\begin{lemma}
Let $G, H$ be finite groups with $G\sim_5 H$ and let $p$ be a prime divisor of $\# G=\# H$. Then $\G_p(\C_{G}^{(p)})\cong \Gamma_p(\C_H^{(p)})$.
\end{lemma}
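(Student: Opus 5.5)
The plan is to reduce to a single well-chosen exponent $t$, and to recognize the induced subgraph on $p$-elements as a distinguished connected component of $\Gamma_t$. Write $n=\# G=\# H=p^a m$ with $\gcd(p,m)=1$. Note that $\cup\C_G^{(p)}$ is exactly the set of $p$-elements of $G$, so $\G_p(\C_G^{(p)})$ is the functional graph of $x\mapsto x^p$ restricted to the $p$-elements; this restriction is well defined because the $p$-elements form a power closed set. By the Chinese Remainder Theorem, I will choose $t\in\Z$ with $t\equiv p\pmod{p^a}$ and $t\equiv 1\pmod m$. Then $x^t=x^p$ for every $p$-element $x$, so the subgraph of $\Gamma_t(G)$ induced by the $p$-elements is literally $\G_p(\C_G^{(p)})$. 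The same holds for $H$. By hypothesis $\Gamma_t(G)\cong\Gamma_t(H)$, and it remains to locate the $p$-element subgraph intrinsically inside $\Gamma_t$.

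Next I will describe the components of $\Gamma_t(G)$. Every $x\in G$ factors uniquely as $x=hy=yh$ with $h$ a $p'$-element and $y$ a $p$-element. Then $x^t=h\,y^p$, since $h^t=h$ and $y^t=y^p$. Iterating, $y^{p^k}\to 1$, so every $x$ eventually reaches $h$. The fixed points of $x\mapsto x^t$ are exactly the $p'$-elements, because $t-1$ is a unit modulo $p^a$ and divisible by $m$. Hence each component of $\Gamma_t(G)$ contains exactly one loop, at some $p'$-element $h$, and that component is $\{hy : y \text{ a } p\text{-element of } C_G(h)\}$. The map $y\mapsto hy$ is an isomorphism from the functional graph of $y\mapsto y^p$ on the $p$-elements of $C_G(h)$ onto this component. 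In particular, the component of the loop at $1_G$ is precisely $\G_p(\C_G^{(p)})$.

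The main obstacle is that a graph isomorphism need not send the loop at $1_G$ to the loop at $1_H$, so I must single out this component by isomorphism-invariant data. My plan is to use size. Each component, through the map $y\mapsto hy$, is isomorphic to the graph of $y\mapsto y^p$ on the $p$-elements of $C_G(h)$. That graph is an induced subgraph of $\G_p(\C_G^{(p)})$, so the component of $1_G$ has maximum size among all components. Moreover, if some component has that same maximum size, then every $p$-element of $G$ lies in $C_G(h)$, and that component is isomorphic to $\G_p(\C_G^{(p)})$. Thus all components of maximum size in $\Gamma_t(G)$ are isomorphic to $\G_p(\C_G^{(p)})$, and likewise for $H$. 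An isomorphism $\Gamma_t(G)\cong\Gamma_t(H)$ maps components bijectively onto components of the same size, so it carries a maximum-size component of one graph onto one of the other. This yields $\G_p(\C_G^{(p)})\cong\Gamma_p(\C_H^{(p)})$.
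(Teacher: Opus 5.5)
Your proof is correct, and it takes a different route from the paper's.

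The paper works with $t=p$ directly. The component of $1_G$ in $\Gamma_p(G)$ is already the tree of $p$-elements. An isomorphism $f$ may send $1_G$ to another loop $h=f(1_G)$, with $h^{p-1}=1_H$. To handle this, the paper uses a minimal-distance argument to show that $z\mapsto z^{p-1}$ is an injective digraph homomorphism from the component of $h$ into the component of $1_H$. It then concludes by mutual embedding of the finite digraphs.

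You instead choose $t$ by the Chinese Remainder Theorem so that every $p'$-element is a fixed point and $p$-parts are raised to the $p$-th power. This gives an explicit description of every component: it is the translate by $h$ of the set of $p$-elements of $C_G(h)$, carrying the $p$-power map. The embedding into the $1$-component is then just inclusion. A maximum-size count replaces both the injectivity argument and the mutual-embedding step.

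The two arguments are closer than they look. For $t=p$, the component of a loop $h$ is the same set of elements $hy$, and the paper's map is $hy\mapsto y^{p-1}$. So the two embeddings differ only by the automorphism $y\mapsto y^{p-1}$ of the $p$-element tree.

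The paper's version proves slightly more: it needs only $\Gamma_p(G)\cong\Gamma_p(H)$, rather than an isomorphism for an exponent adapted to $\#G$. Yours gives a cleaner structural picture of the whole digraph and avoids the minimal-counterexample injectivity argument.
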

\begin{proof}
For each finite group $K$ and each $g\in K$, let $K_g$ be the connected component of $g$ in $\G_p(K)$. 
Observe that, for $K=G, H$, we have $\G_p(\C_K^{(p)})=K_{1_K}$. Moreover, this digraph is a rooted tree with a loop at the root.
It suffices to prove that $G_{1_G}\cong H_{1_H}$. 
It will be desirable to obtain an isomorphism between the digraphs $\G_p(G)$ and $\G_p(H)$ mapping $1_G$ to $1_H$. However, we cannot obtain this directly. 

Suppose $f:G\to H$ is an isomorphism between $\Gamma_p(G)$ and $\Gamma_p(H)$ and set $h=f(1_G)$. Since $1_G$ corresponds to a loop in $\Gamma_p(G)$, the same must hold for $h$ in $\Gamma_p(H)$. Therefore, $h^p=h$ and then $h^{p-1}=1$.

{\bf Claim.} {\em The map $z\mapsto z^{p-1}$ induces an injective homomorphism from $H_{h}$ to $H_{1_H}$. }

{\em Proof of the Claim.} If $z_1\to z_2$ is an edge of $H_{h}$, then $z_2=z_1^p$ and there exist integers $i, j$ such that $z_1^{p^i}=z_2^{p^j}=h$. Hence $z_2^{p-1}=(z_1^{p-1})^p$ and 
$$(z_1^{p-1})^{p^i}=(z_2^{p-1})^{p^j}=h^{p-1}=1_H.$$
In other words, $z_1^{p-1}\to z_2^{p-1}$ is an edge of $H_{1_H}$. Now we prove that this map restricts to an injection. Suppose by contradiction that $h_1, h_2$ are distinct vertices of $H_h$ with $h_1^{p-1}=h_2^{p-1}$ and take such elements $h_i$ in a way that the distance from $h_1$ to the root $h$ of the tree $H_h$ is minimal. Therefore $(h_1^p)^{p-1}=(h_2^p)^{p-1}$. However, $h_1^p, h_2^p$ are vertices of $H_h$ with $h_1^p$ closer to $h$ than $h_1$.  From the minimality of $h_1$, such vertices cannot be distinct and then $h_1^p=h_2^p$. Since $h_1^{p-1}=h_2^{p-1}$ we obtain $h_1=h_2$, a contradiction. This proves the claim. \qed

The claim entails that $H_{1_H}$ contains a copy of $H_h$. As $H_h\cong G_{1_G}$, the graph $H_{1_H}$ contains a copy of $G_{1_G}$. We can argue similarly (taking $g=f^{-1}(1_H)$) and conclude that 
$G_{1_G}$ contains a copy of $H_{1_H}$. Since these digraphs are finite we obtain $G_{1_G}\cong H_{1_H}$, concluding the proof.
  
\end{proof}

The next step is to prove the following result.

\begin{center}
   {\em  If $G, H$ are finite groups with $\Gamma_p(\C_G^{(p)})\cong \Gamma_p(\C_H^{(p)})$, then $\la(\C_G^{(p)})\cong \la(\C_H^{(p)})$.}
\end{center}

The idea to prove the latter is to show that there is an isomorphism between $\Gamma_p(\C_G^{(p)})$ and $\Gamma_p(\C_H^{(p)})$ mapping a fixed compatible system of generators for $\C_G^{(p)}$ into a compatible system of generators for $\C_H^{(p)}$. If we guarantee this, $\la(\C_G^{(p)})$ can be viewed as a subgraph of $\la(\C_H^{(p)})$. Furthermore, the symmetric assumptions on $G, H$ provide the converse of the latter, concluding the desired isomorphism since these digraphs are finite. 

 For the sake of clarity and organization,  we first introduce some notation that we use only in the proof of the result above. 
 Recall that  $\Gamma_p(\C_G^{(p)})$ is always a rooted tree with a loop at the root corresponding to the vertex $1_G$. For each $g\in \cup \C_G^{(p)}$, let $T_g$ be the graph induced by the set of $\widetilde{g}\in G$ that are connected to $g$ by a directed path. Observe that $T_g$ is just the largest rooted subtree of $\Gamma_p(\C_G^{(p)})$ having $g$ as a root.
 For $g, \bar{g}\in G$ we write $g\approx \bar{g}$ if $T_g\cong T_{\bar{g}}$ and $g^p=\bar{g}^p$. It is clear that $\approx$ is an equivalence relation.
 
\begin{remark}\label{rem:approx}
    Observe that if $g\in \cup\C_G^{(p)}$, then $\langle g\rangle=\langle\bar{g}\rangle$ if and only if $\bar{g}=g^k$ for some $0<k\le \ord(g)$ with $\gcd(k, p)=1$. In this case, $\bar{g}\in \cup\C_G^{(p)}$ and the map $z\mapsto z^k$ induces an automorphism of $\Gamma_p(\C_G^{(p)})$ mapping $T_g$ to $T_{\bar{g}}$, hence $T_g\cong T_{\bar{g}}$. Moreover, if $g^p=\bar{g}^p$ and $\ord(g)=p^d$ with $d\ge 1$, we obtain $$g^p=\bar{g}^p=g^{pk}\iff g^{p(k-1)}=1\iff k\equiv 1\pmod {p^{d-1}}.$$ As $0< k<p^d$, we have exactly $N$ elements satisfying this property, where $N=p-1$ for $d=1$ and $N=p$ for $d>1$. 
 Therefore, each $g\in \cup\C_G^{(p)}$ of order $p^d>1$ yields exactly $N(g)$ elements $\bar{g}\in \cup\C_G^{(p)}$ such that $\langle g\rangle=\langle\bar{g}\rangle$ and $g\approx \bar{g}$, where $N(g)=p-1$ if $d=1$ and $N(g)=p$ if $d>1$.
\end{remark}

Now, fix $\{g_1, \ldots, g_s\}$ a compatible set of generators for $\C_G^{(p)}$ (ensured by Corollary~\ref{cor:comp}). For each $f:\cup\C_G^{(p)}\to \cup\C_H^{(p)}$ inducing an isomorphism between $\G_p(\C_G^{(p)})$ and $\G_p(\C_H^{(p)})$, a {\em bad pair} for $f$ is a pair $(g_i, g_j)$ with $i\ne j$ such that $f(g_i)$ and $f(g_j)$ generate the same cyclic subgroup. We observe that any isomorphism between $\G_p(\C_G^{(p)})$ and $\G_p(\C_H^{(p)})$ must preserve the roots, hence $f(1_G)=1_H$. In particular, since the order of an element $g\in \cup\C_G^{(p)}$ is $p^d$ if and only if the distance from $g$ to the root $1_G$ in $\G_p(\C_G^{(p)})$ is $d$, it follows that any isomorphism preserves the orders of the elements. In particular, if $(g_i, g_j)$ is a bad pair, then both elements have order $p^d$ for some $d\ge 1$. We can then associate to $f$ the sum
$$\mathcal E(f):=\sum_{1\le i<j\le s} d_f(g_i, g_j),$$
where $d_f(g_i, g_j)=0$ if $(g_i, g_j)$ is not a bad pair for $f$ and $d(g_i, g_j)=\# G^{-2d}$ if $(g_i, g_j)$ is a bad pair for $f$ comprising elements of order $p^d$.  In particular, $\mathcal E(f)\ge 0$ with equality if and only if $f$ does not have bad pairs.

We observe that if $f$ has a bad pair, then the set $\{f(g_i): 1\le i\le s\}$ cannot be contained in a compatible system of generators for $H$. The following lemma guarantees the existence of an isomorphism without bad pairs. The informal idea is to perform a sequence of ``flips" on suitable subtrees of the graphs $\G_p(\C_K^{(p)}), K=G, H$, in a way that we decrease the number of bad pairs of ``small" order for a fixed isomorphism. By construction, the value of $\mathcal E$ will decrease, hence  it will eventually vanish since we have a finite number of isomorphisms between $\G_p(\C_G^{(p)})$ and $\G_p(\C_H^{(p)})$. 

 \begin{lemma}
Let $G, H$ be finite groups of order divisible by $p$ with $\G_p(\C_G^{(p)})\cong \G_p(\C_H^{(p)})$.   Then there  exists an isomorphism  $f:\cup\C_G^{(p)}\to \cup\C_H^{(p)}$ between these two digraphs with $\mathcal E(f)=0$, i.e., without bad pairs.
 \end{lemma}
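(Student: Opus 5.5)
Among all isomorphisms $f:\cup\C_G^{(p)}\to\cup\C_H^{(p)}$ between $\G_p(\C_G^{(p)})$ and $\G_p(\C_H^{(p)})$, which form a finite nonempty set, I pick one minimizing $\mathcal E(f)$. Suppose $\mathcal E(f)>0$. I will construct an isomorphism $f'$ with $\mathcal E(f')<\mathcal E(f)$, which is a contradiction. Among the bad pairs of $f$, choose $(g_i,g_j)$ whose elements have minimal order $p^d$. The weight $\#G^{-2d}$ is chosen so that removing one bad pair of order $p^d$ outweighs creating any number of new bad pairs of larger order $p^{d'}$ with $d'>d$. Indeed, there are fewer than $\#G^2$ pairs in total, so the contribution of all pairs of order $p^{d+1}$ or more is less than $\#G^2\cdot \#G^{-2d-2}=\#G^{-2d}$. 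It therefore suffices to find $f'$ that has strictly fewer bad pairs of order $p^d$, has no new bad pairs of order below $p^d$, and may do anything at higher orders.

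The modification is a flip. Write $u=f(g_i)$ and $v=f(g_j)$, so that $\langle u\rangle=\langle v\rangle$. Since $\langle g_i\rangle\ne\langle g_j\rangle$, the number of generators of $\langle g_i\rangle$ that are $\approx g_i$ (counted in Remark~\ref{rem:approx}) together with those for $g_j$ exceeds what a single cyclic group $\langle u\rangle$ can supply. The plan is to find a vertex $w$ of order $p^d$ with the following properties: $w^p=u^p$, $T_w\cong T_u$, $f^{-1}(w)$ is not a power of any $g_k$ with $k\ne j$ of order $p^d$, and $\langle w\rangle$ is not among the images $\langle f(g_k)\rangle$ for $\ord(g_k)=p^d$. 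I then define $f'$ by swapping the subtrees $T_v$ and $T_w$ through a fixed isomorphism $T_v\cong T_w$ (this requires $v\approx w$), leaving everything else unchanged. Because $v^p=w^p$, the map $f'$ is still a digraph isomorphism. Vertices of order $\le p^{d-1}$ are untouched, so no bad pairs of smaller order appear, and the pair $(g_i,g_j)$ stops being bad.

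To find $w$, I would count classes. The children of $u^p$ in the tree are partitioned into $\approx$-classes. Each class is a union of "generator orbits" $\{z'\,:\,\langle z'\rangle=\langle z\rangle,\ z'\approx z\}$, and by Remark~\ref{rem:approx} each orbit has size $N=p-1$ or $p$. Pulling back by $f$ (which preserves $\approx$ and the parent relation), the corresponding class in $G$ has the same size, hence the same number of cyclic subgroups. The cyclic subgroups of order $p^d$ in $G$ represented in this class each contain exactly one chosen generator $g_k$, by compatibility. In $H$, the images of these $g_k$'s are forced into fewer than the available number of cyclic subgroups, because two of them collide. Hence some cyclic subgroup $\langle w\rangle$ in the same $\approx$-class over $u^p$ contains no $f(g_k)$. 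Here I use that $g_i^p=g_j^p$ up to the compatible system, i.e.\ both $p$-th powers are the distinguished generator of the common subgroup of order $p^{d-1}$ when $d\ge2$. I would also check that $f(g_i)$ and $f(g_j)$ are $\approx$-equivalent to this class.

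The main obstacle is verifying that the swap does not create new bad pairs of order exactly $p^d$. Only the vertices in the orbit of $v$ and of $w$ change their cyclic subgroup. The count above must therefore be arranged so that the class of $w$ contains no image of a distinguished generator, and then no new collision arises at level $d$. If the $\approx$-class bookkeeping fails when $T_v\not\cong T_w$, I would fall back on choosing $w$ inside $\langle u\rangle$'s $\approx$-orbit structure more carefully, using Remark~\ref{rem:approx}'s exact orbit sizes.
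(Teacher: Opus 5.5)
Your architecture is the paper's: minimize $\mathcal E$, take a bad pair of minimal order $p^d$, count inside an $\approx$-class using Remark~\ref{rem:approx}, and flip two isomorphic subtrees with a common parent. Your weight estimate is also correct. However, the deduction from the minimality of $d$ is never made, and you rely on it twice.

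\textbf{First use: showing $u\approx v$.} You need $u^p=v^p$, equivalently $g_i^p=g_j^p$, so that $u\approx v$ and the flip of $T_v$ with $T_w$ is legitimate. Your justification is that both $p$-th powers are ``the distinguished generator of the common subgroup of order $p^{d-1}$''. This presupposes that $\langle g_i\rangle$ and $\langle g_j\rangle$ share their subgroup of index $p$, which is false in general for distinct cyclic subgroups. For example, $\langle(1,0)\rangle$ and $\langle(0,1)\rangle$ in $\Z_{p^2}\times\Z_{p^2}$ have different subgroups of order $p$. The correct reason is minimality. From $\langle u\rangle=\langle v\rangle$ we get $\langle f(g_i^p)\rangle=\langle f(g_j^p)\rangle$. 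By compatibility, $g_i^p$ and $g_j^p$ are themselves distinguished generators. So $(g_i^p,g_j^p)$ would be a bad pair of order $p^{d-1}$ unless $g_i^p=g_j^p$.

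\textbf{Second use: the step you leave open.} You flag as ``the main obstacle'' that the flip might create new bad pairs of order $p^d$, and you do not resolve it. Your pigeonhole only shows that $\langle w\rangle$ avoids the images of distinguished generators lying \emph{in the $\approx$-class} of $v$. A distinguished generator $g_k$ of order $p^d$ outside that class could a priori have $\langle f(g_k)\rangle=\langle w\rangle$ with a different $p$-th power. Then $(g_j,g_k)$ would be a new bad pair for $f'$.

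The same minimality argument rules this out. Put $\gamma=g_j^p$ and $\eta=f(\gamma)=w^p$. If $\langle f(g_k)\rangle=\langle w\rangle$, then $\langle f(g_k^p)\rangle=\langle \eta\rangle$, so minimality forces $g_k^p=\gamma$. Hence $f(g_k)^p=w^p$, and $f(g_k)\approx w$ by Remark~\ref{rem:approx}, so $f(g_k)$ lies in the class after all. This is exactly the paper's verification that no pair $(g_1,g_b)$ is bad for $\Psi$.

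Once this is in place, $f^{-1}(w)$ is none of the $g_k$. The flip then changes the image of only $g_j$ among distinguished generators of order $\le p^d$, and sends it into a subgroup containing no other image.

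\textbf{A misstated requirement.} Your condition ``$f^{-1}(w)$ is not a power of any $g_k$ with $k\neq j$ of order $p^d$'' is not the right one. Every element of order $p^d$ generates some $\langle g_k\rangle$, so read literally this forces $f^{-1}(w)$ to generate $\langle g_j\rangle$, which cannot be arranged in general. The condition you actually need is $f^{-1}(w)\notin\{g_1,\dots,g_s\}$, and that follows automatically once $\langle w\rangle$ contains no $f(g_k)$.
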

 \begin{proof}
Suppose by contradiction that every isomorphism  between $\G_p(\C_G^{(p)})$ and $\G_p(\C_G^{(p)})$ yields a positive value of $\mathcal E$. As there exist only a finite number of such isomorphisms, there exist one of them, say $F$, such that $\mathcal E(F)$ is minimal. Without loss of generality, assume $(g_1, g_2)$ is a bad pair for $F$ with smallest possible order $p^d$. For each $1\le i\le s$, set $h_i=F(g_i)$. From hypothesis, $h_1$ and $h_2$ generate the same cyclic subgroup. The same must hold for $h_1^p=F(g_1)^p=F(g_1^p)$ and $h_2^p=F(g_2)^p=F(g_2^p)$. However, since the elements $g_i$ comprise a compatible system of generators for $\C_G^{(p)}$, both $g_1^p, g_2^p$ are elements in $\{g_1, \ldots, g_s\}$. Therefore, $(g_1^p, g_2^p)$ would be a bad pair for $F$ of order $p^{d-1}<p^d$ unless $g_2^p=\gamma:=g_1^p$. Since $(g_1, g_2)$ is a bad pair for $F$ with smallest order, the last equality must occur and so 
$$\eta:=h_1^p=F(g_1)^p=F(g_1^p)=F(\gamma)=F(g_2^p)=F(g_2)^p=h_2^p.$$ 

As $h_1, h_2$ generate the same cyclic group, we conclude that $h_1\approx h_2$. It is clear that the relation $\approx$ is invariant under isomorphisms. Hence $g_1\approx g_2$. Assume, without loss of generality, that $\{g_1, \ldots, g_m\}$ with $m\ge 2$ is the subset of $\{g_1, \ldots, g_s\}$ comprising the elements $g_i$ such that $g_i\approx g_1$.  Hence for $1\le i\le m$, we have $h_i\approx h_1$. Moreover, the elements $g_i$ and $h_i$ with $1\le i\le m$ have all order $p^{d}$. 

The elements $g_i$ clearly do not generate the same cyclic subgroups (they belong to a compatible system of generators), hence Remark~\ref{rem:approx} entails that they produce $mN(g_1)$ elements $x\in G$ with $x\approx g_1$. As $F$ is an isomorphism,  there exist at least $mN(g_1)=mN(h_1)$ elements $y\in H$ with $y\approx F(g_1)=h_1$. Since $h_1$ and $h_2$ generate the same cyclic subgroup, Remark~\ref{rem:approx} entails that the contribution of $\langle h_1\rangle \cup\cdots \cup \langle h_m\rangle$ to such elements $y$ is at most $$(m-1)N(h_1)<mN(h_1).$$ Hence there exists $h'\in \cup\C_H^{(p)}$ with $h'\approx h_1$ in a way that $h'\not\in\langle h_i\rangle$ for $1\le i\le m$. In particular, $h'\ne h_i$ for $1\le i\le m$. If $h'=h_i$ for some $i>m$, we obtain $$g_i=F^{-1}(h_i)=F^{-1}(h')\approx F^{-1}(h_1)=g_1,$$ 
a contradiction with $i>m$.

Therefore, $g':=F^{-1}(h')\ne g_i$ for $1\le i\le s$. We clearly have the isomorphism 
 $$T_{h'}\cong T_{h_1}.$$
Moreover, the vertices $h', h_1$ are directed to the same vertex $\eta=h_1^p$. In particular, there exists a ``flip" on $\Gamma_p(\C_H^{(p)})$ that switches those trees. In other words, there exists an automorphism $\Theta$ of $\Gamma_p(\C_H^{(p)})$ such that $\Theta(T_{h'})=T_{h_1}, \Theta(T_{h_1})=T_{h'}$ and $\Theta$ fixes any other vertex out of these two trees. It is clear that $\Psi:=\Theta\circ F$ is an isomorphism between $\Gamma_p(\C_G^{(p)})$ and $\G_p(\C_H^{(p)})$. Moreover, observe that the vertices in $T_{h'}, T_{h_1}$ correspond to elements of order $\ge  p^{d+1}$ and the elements $h_1, h'$ which have order $p^d$. Hence $\Psi(g)=F(g)$ whenever $\ord(g)\le p^{d}$ with the exception of $\Psi(g_1)=\Theta(h_1)=h'$ and $\Psi(g')=\Theta(h')=h_1$.

We claim that $\mathcal E(\Psi)<\mathcal E(F)$. In fact, let $(g_a, g_b)$ be a bad pair for $\Psi$ of order $\le p^d$. Recall that $g'$ is not equal to any of the elements $g_i$. Therefore, if $1\not\in \{a, b\}$, then $\Psi(g_k)=F(g_k)$ for $k\in \{a, b\}$ and so $(g_a, g_b)$ is also a bad pair for $F$. Now we note that no pair $(g_1, g_b)$  can be bad for $\Psi$. In fact, assume $(g_1, g_b)$ is a bad pair for $\Psi$, hence $g_b$ has order $p^d$. Since the elements $g_i$ form a compatible system of generators for $\C_G^{(p)}$, both $g_1^p$ and $g_b^p$ are in $\{g_1, \ldots, g_s\}$, but none of them are equal to $g_1$. Therefore if $g_1^p\ne g_b^p$, then $(g_1^p, g_b^p)=(g_k, g_l)$ are elements of order $p^{d-1}< p^d$ with $1\not\in \{k, l\}$.  From previous observation, $(g_1^p, g_b^p)$ is also a bad pair for $F$. The latter contradicts our assumption that $(g_1, g_2)$ is a bad pair for $F$ of smallest order. Thus $g_1^p=g_b^p$ and so 

$$(h')^p=\Psi(g_1)^p=\Psi(g_1^p)=
\Psi(g_b^p)= 
\Psi(g_b)^p=F(g_b)^p=h_b^p.$$
In other words, $h_b^p=(h')^p=\eta$. Since $h_b, h'$ generate the same subgroup, Remark~\ref{rem:approx} entails that $h_b\approx h'$. Moreover, as $h'\approx h_1$, we obtain $h_1\approx h_b$. Therefore, $g_b=F^{-1}(h_b)\approx F^{-1}(h_1)=g_1$. However, from our assumption on the elements $g_i$, the latter implies $1\le b\le m$. This is a contradiction with the fact that $h'$ is not in the cyclic group generated by any of the elements $h_i$ with $1\le i\le m$. In conclusion, the following hold:

\begin{itemize}
    \item every pair $(g_a, g_b)$ of order $\le p^d$ with $1\not\in \{a, b\}$ that is bad for $\Psi$ is also bad for $F$;
    \item no pair $(g_1, g_b)$ is bad for $\Psi$;
    \item the pair $(g_1, g_2)$ is bad for $F$.
\end{itemize}
The latter implies 
$$\mathcal E(F)-\mathcal E(\Psi)\ge d_F(g_1, g_2)-\Delta=\# G^{-2d}-\Delta,$$
where $\Delta$ is the sum of $d_{\Psi}(g_a, g_b)$ running over all the pairs of order $\ge p^{d+1}$ with $a<b$. Of course we have $d_{\Psi}(g_a, g_b)\le \# G^{-2(d+1)}$ for any such pair, and there are at most $\frac{\# G^2}{2}$ of them. In particular, we have $\Delta\le \frac{\#G^2}{2}\cdot \# G^{-2(d+1)}$ and so 
$$\mathcal E(F)-\mathcal E(\Psi)\ge \# G^{-2d}-\frac{\# G^{-2d}}{2}=\frac{\# G^{-2d}}{2}>0.$$
Thus $\mathcal E(F)>\mathcal E(\Psi)$, a contradiction with the minimality of $\mathcal E(F)$. 
\end{proof}

The previous lemma gives us the following corollary.

\begin{corollary}
Let $G, H$ be groups with $\Gamma_p(\C_G^{(p)})\cong \G_p(\C_H^{(p)})$. Then $\la(\C_G^{(p)})\cong \la(\C_H^{(p)})$.
\end{corollary}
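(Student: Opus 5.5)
By the previous lemma, there is an isomorphism $f:\cup\C_G^{(p)}\to \cup\C_H^{(p)}$ between $\G_p(\C_G^{(p)})$ and $\G_p(\C_H^{(p)})$ with $\mathcal E(f)=0$. Fix the compatible system of generators $\{g_1,\ldots,g_s\}$ for $\C_G^{(p)}$ used in the definition of $\mathcal E$, so that $C_i=\langle g_i\rangle$ runs over $\C_G^{(p)}$. The plan is to show that the map $\widetilde f:C_i\mapsto \langle f(g_i)\rangle$ is an injective morphism of vertex-weighted digraphs from $\la(\C_G^{(p)})$ into $\la(\C_H^{(p)})$. Exchanging the roles of $G$ and $H$ gives the reverse embedding, and finiteness then yields an isomorphism, as anticipated in the discussion preceding the lemma.

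\textbf{Injectivity and weights.} Since $f$ has no bad pairs, the subgroups $\langle f(g_i)\rangle$ are pairwise distinct, so $\widetilde f$ is injective. As observed in the proof of the lemma, $f$ preserves the distance to the root, hence preserves orders. Therefore $\#\langle f(g_i)\rangle=\#C_i$, and weights are preserved.

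\textbf{Preservation of edges.} Every $p$-power cyclic group is a chain, so $C_j\subsetneq C_i$ holds exactly when $\#C_j=p^{-k}\#C_i$ for some $k\ge1$ and $C_j=\langle g_i^{p^k}\rangle$. By compatibility (applied with $C_i\cap C_j=C_j$) we get $g_i^{p^k}=g_j$. Since $f$ commutes with the $p$-th power map, this gives $f(g_j)=f(g_i)^{p^k}$, and hence $\langle f(g_j)\rangle\subsetneq\langle f(g_i)\rangle$. Thus edges of $\la(\C_G^{(p)})$ are sent to edges of $\la(\C_H^{(p)})$.

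\textbf{Conclusion.} We obtain an injective, weight-preserving, edge-preserving map $\la(\C_G^{(p)})\to\la(\C_H^{(p)})$, and symmetrically one in the other direction. Both maps are injective on vertices, so the vertex counts agree and both maps are bijections. Both are edge-preserving injections, so edge counts agree as well, and a bijection that maps edges injectively into an edge set of the same finite size is a digraph isomorphism.

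The main obstacle is the edge-reflection issue: we need $\widetilde f$ to be onto $\C_H^{(p)}$ and to reflect edges, not merely preserve them. The counting argument above avoids having to check directly that non-edges go to non-edges. Alternatively, surjectivity can be verified directly: every element of $\cup\C_H^{(p)}$ equals $f(x)$ for some $x\in C_i$, and $f(C_i)=\langle f(g_i)\rangle$ because $f$ maps the chain of $p$-powers of $g_i$ onto the chain of $p$-powers of $f(g_i)$, with matching sizes. This shows every cyclic $p$-subgroup of $H$ is some $\langle f(g_i)\rangle$.
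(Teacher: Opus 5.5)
Your proof is correct and is essentially the paper's argument: the bad-pair-free isomorphism from the previous lemma, combined with compatibility ($g_i^{p^k}=g_j$ when $C_j\subsetneq C_i$), gives an injective weight- and edge-preserving map $C_i\mapsto\langle f(g_i)\rangle$, and the symmetric embedding plus finiteness gives the isomorphism; you usefully make that last step explicit by counting vertices and edges. You should drop your closing alternative, because $f(C_i)=\langle f(g_i)\rangle$ is false in general, since $f$ only commutes with $x\mapsto x^p$ (in $\Z_3\times\Z_3$ with generators $(1,0),(0,1),(1,1),(1,2)$, swapping $(2,0)$ and $(0,2)$ is a bad-pair-free $\G_3$-automorphism that does not map $\langle(1,0)\rangle$ to itself), but your main counting argument does not need it.
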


\begin{proof}
Let $\{g_1, \ldots, g_s\}$ be a compatible system of generators for $\C_G^{(p)}$ (ensured by Corollary~\ref{cor:comp}) and let $C_i$ be the cyclic subgroup generated by $g_i$. 
From the previous lemma, there exists a bijection $F:\cup\C_G^{(p)}\to \cup\C_H^{(p)}$ such that $F(g_i^p)=F(g_i)^p$ and the elements $h_i=F(g_i)$ generate pairwise distinct cyclic subgroups of $H$. Let $D_i$ be the cyclic subgroup of $H$ generated by $h_i$. It is clear that the groups $C_i$ are the cyclic subgroups of $G$. We set $\mathcal F:\C_{G}^{(p)}\to \C_H^{(p)}$ with $\mathcal F(C_i)=D_i$. From hypothesis,  $\mathcal F$ is injective. Moreover, as $F$ preserves the order of the elements, $\# D_i=\#\mathcal F(C_i)=\# C_i$. Finally, we prove that $\mathcal F$ is also a graph homomorphism. If $C_i\to C_j$ is an edge of $\la(\C_G^{(p)})$, then $C_j\subsetneq C_i$. Since the elements $g_i$ are in a compatible system of generators for $\C_G^{(p)}$, the latter implies $g_j=g_i^{p^s}$ for some $s\ge 1$ and so 
$$h_j=F(g_j)=F(g_i^{p^s})=F(g_i)^{p^s}=h_i^{p^s}.$$
Thus $D_j\subsetneq D_i$, that is, $D_j\to D_i$ is an edge in $\la(H)$. In conclusion, $\mathcal F$ is an injective graph homomorphism that preserves the weights of the vertices. Therefore, we can see $\la(\C_G^{(p)})$ as a subgraph of $\la(\C_H^{(p)})$. However, the converse must also hold (switch $G$ and $H$). Since these digraphs are finite, we obtain $\la(\C_H^{(p)})\cong \la(\C_G^{(p)})$.
\end{proof}

The proof of $P_5\le P_3$ for the class of finite groups follows directly by the previous corollary and Proposition~\ref{prop:prime}. This concludes the proof of Theorem~\ref{thm:newmain}.

\section{Introducing $\alpha$-groups}\label{sec:alpha}
It is well known that, for a finite abelian group $G$ and $g\in G$, if the equation $x^n=g$ has a solution, then the number of solutions for such equation equals the number of solutions for the equation $x^n=1_G$. The following definition seeks to explore this property for more general groups by restricting it to special pairs $(n, g)$.

\begin{definition}\label{def:alpha}
    Let $G$ be a finite group. A nonempty subset $S\subseteq \C_G$ is an $\alpha$-set if $S$ is hereditary and, in addition, there exists a prime divisor $p$ of $\# G$ such that:
    
    \begin{enumerate}[(i)]
        \item $S\subseteq \C_G^{(p)}$;
        \item for each $i\ge 1$ and every $g\in \cup S$, the number of solutions to $x^{p^i}=g$ in $\cup S$ is either $0$ or 
    $$n_i(S):=\#\{x\in \cup S\colon x^{p^i}=1_G \}.$$
    \end{enumerate}
Moreover, $G$ is an $\alpha$-group if $\C_G^{(p)}$ is an $\alpha$-set for every prime divisor $p$ of $\# G$.    
\end{definition}
\begin{example}\label{ex:alpha}
Let $p$ be a prime number and let $G$ be a finite group of exponent $p$, hence $g^p=1_G$ for every $g\in G$. Therefore, if $g\ne 1_G$ and $i\ge 1$, the equation $x^{p^i}=g$ has no solution in $G$. In particular, $G$ is an $\alpha$-group. 
\end{example}

Abelian groups are clearly $\alpha$-groups. More generally, it is direct to verify that the following result holds.

\begin{lemma}\label{lem:alpha-ex}
Let $G$ be a finite group such that, for every prime $p$ dividing $\# G$, the following holds: if $x, y\in \cup \C_G^{(p)}$, then for every $i\ge 1$ we have $x^{p^i}=y^{p^i}$ if and only if $(xy^{-1})^{p^i}=1_G$. Then $G$ is an $\alpha$-group. 
\end{lemma}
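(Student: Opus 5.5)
Fix a prime $p$ dividing $\# G$ and set $S=\C_G^{(p)}$, so that $\cup S$ is exactly the set of $p$-elements of $G$. The set $S$ is nonempty and hereditary, since every subgroup of a cyclic $p$-group is again a cyclic $p$-group, and condition (i) of Definition~\ref{def:alpha} holds by construction. So the only thing to check is condition (ii). Fix $i\ge 1$ and $g\in \cup S$, and assume the equation $x^{p^i}=g$ has at least one solution $y\in \cup S$. The goal is to show that the solution set
$$X_g:=\{x\in \cup S: x^{p^i}=g\}$$
has exactly $n_i(S)=\#X_{1_G}$ elements. The plan is to exhibit an explicit bijection between $X_g$ and $X_{1_G}$, built from the fixed solution $y$ together with the hypothesis of the lemma.

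For $x\in X_g$ we have $x^{p^i}=y^{p^i}$, so the hypothesis gives $(xy^{-1})^{p^i}=1_G$. The natural map to try is therefore $\phi:X_g\to G$, $\phi(x)=xy^{-1}$. This map is injective because it is a right translation. It lands in $X_{1_G}$ provided $xy^{-1}\in \cup S$, and this holds automatically: an element $z$ with $z^{p^i}=1_G$ has order a power of $p$, so $\langle z\rangle\in \C_G^{(p)}$ and $z\in \cup S$. This is the step I flagged as a possible obstacle, namely that $\cup S$ need not be closed under products. It dissolves because membership in $X_{1_G}$ needs only the order condition. Injectivity then gives $\#X_g\le \#X_{1_G}$.

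For the reverse inequality, take $z\in X_{1_G}$ and set $x=zy$. To show $x\in X_g$, I apply the hypothesis to the pair $(x,y)$, but this requires knowing that $x\in\cup S$, which is not automatic. I see two ways to handle this. One is to show that $x=zy$ is a $p$-element directly. The cleaner route is a counting argument. Applying the same injection with the roles reversed and $g=1_G$ (so the fixed solution is $y=1_G$) shows nothing new, so instead I use the map $\phi$ directly. The pairs $(x,y')$ with $x,y'\in \cup S$ and $x^{p^i}=y'^{p^i}$ split into fibres $X_h$ over $h\in (\cup S)^{p^i}$, and the hypothesis says each fibre injects into $X_{1_G}$ by right translation. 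Summing over the fibres gives $\#\cup S=\sum_h \#X_h\le |\{h\}|\cdot \#X_{1_G}$.

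I expect this reverse inequality to be the real difficulty. To close it, I would use the converse direction of the hypothesis, with both $x$ and $zy$ checked to be $p$-elements. Concretely, I would argue that $\langle zy\rangle$ is a $p$-group by iterating the hypothesis on powers. As a fallback, I would note that the ``if'' direction applied to $x=zy$ and $y$ gives $(zy)^{p^i}=y^{p^i}$ as soon as $zy\in\cup S$, which yields a bijection $X_{1_G}\to X_g$. With both inequalities in hand, $\#X_g=n_i(S)$, which establishes condition (ii) and hence that $G$ is an $\alpha$-group.
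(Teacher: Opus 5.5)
You have correctly reduced to condition (ii), and your forward map is fine. For $x\in X_g$ the hypothesis gives $(xy^{-1})^{p^i}=1_G$, so $xy^{-1}$ is a $p$-element and $x\mapsto xy^{-1}$ injects $X_g$ into $X_{1_G}$.

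\textbf{The gap is the reverse inequality, which you leave open.} The counting over fibres only gives the global bound $\#\cup S\le \#\{h\}\cdot \#X_{1_G}$, which says nothing about the particular fibre $X_g$. ``Iterating the hypothesis on powers'' is not an argument. Your fallback explicitly assumes $zy\in\cup S$, and that is exactly the fact you need. It cannot be taken for granted, because $p$-elements of a finite group are not closed under products in general: two transpositions in $S_3$ multiply to a $3$-cycle.

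The missing idea is to apply the ``only if'' direction of the hypothesis with a large exponent. Choose $N\ge 1$ so that $p^N$ is a multiple of the order of every $p$-element of $G$, for instance $p^N$ the $p$-part of $\#G$. Given $p$-elements $z$ and $y$, apply the hypothesis to the pair $(z,y^{-1})$ with exponent $N$. Since $z^{p^N}=1_G=(y^{-1})^{p^N}$, you get $(zy)^{p^N}=1_G$, so $zy\in\cup\C_G^{(p)}$. In particular, the hypothesis forces the $p$-elements to form a subgroup, hence a normal Sylow $p$-subgroup, so such a $G$ is actually nilpotent.

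With closure in hand, take $z\in X_{1_G}$ and set $x=zy$. Then $x\in\cup S$ and $(xy^{-1})^{p^i}=z^{p^i}=1_G$, so the ``if'' direction gives $x^{p^i}=y^{p^i}=g$. Thus $z\mapsto zy$ is the inverse of your map $X_g\to X_{1_G}$, and $\#X_g=n_i(S)$ whenever $X_g$ is nonempty.
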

 
A direct consequence of Lemma~\ref{lem:alpha-ex} is that $\alpha$-groups include all the {\em regular $p$-groups}: see Theorem 7.2 in~\cite{GPO}. Recall that a finite $p$-group $G$ is a regular $p$-group if, for every $x,y\in G$, we have $$x^p\cdot y^p=(xy)^p\cdot c^p,$$ for some $c$ in the derived subgroup $\langle x,y\rangle'$ of the group generated by $x$ and $y$. In particular, abelian $p$-groups are regular $p$-groups: just take $c=1$. Moreover, the converse is also true for $p=2$: see Theorem~2.8 in~\cite{IFPG}. However, for every $p>2$, there are many regular $p$-groups that are not abelian. Regular $p$-groups play an important role in Group Theory and they serve as a natural generalization of abelian $p$-groups. For more information on regular $p$-groups, see~\cite{GPO} and ~\cite{IFPG}.

\begin{remark}
The definition of $\alpha$-groups is primarily motivated by $p$-group constructions, but it also includes certain groups that are not nilpotent. For instance, when $n$ is odd, it is direct to check that the dihedral group $D_{2n}$ is an $\alpha$-group.
\end{remark}

In the following lemma we prove that $\alpha$-groups are closed under direct products. 

\begin{lemma}
If $G_1, G_2$ are $\alpha$-groups, then so is $G_1\times G_2$. 
\end{lemma}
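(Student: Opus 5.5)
The plan is to verify Definition~\ref{def:alpha} directly for $G:=G_1\times G_2$, one prime at a time, by showing that the set of $p$-elements of $G$ splits as a product and that solution counts for $x^{p^i}=g$ split multiplicatively.

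Fix a prime $p$ dividing $\# G=\# G_1\cdot\# G_2$. For $K=G_1,G_2$, put $P_K:=\cup\C_K^{(p)}$, the set of elements of $K$ of $p$-power order. If $p\nmid\# K$ then $P_K=\{1_K\}$. The first step is the identification
$$\cup\C_G^{(p)}=P_{G_1}\times P_{G_2}.$$
This holds because $\ord(x_1,x_2)=\mathrm{lcm}(\ord(x_1),\ord(x_2))$, and this lcm is a power of $p$ if and only if both orders are. The family $\C_G^{(p)}$ is automatically hereditary and contained in $\C_G^{(p)}$, so condition (i) and heredity are immediate. Only condition (ii) needs work.

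For (ii), fix $i\ge 1$ and $g=(g_1,g_2)\in\cup\C_G^{(p)}$. Since powers are computed coordinatewise, $(x_1,x_2)^{p^i}=g$ holds if and only if $x_1^{p^i}=g_1$ and $x_2^{p^i}=g_2$. Writing $N_K(y)$ for the number of solutions of $x^{p^i}=y$ in $P_K$, the first step gives
$$\#\{x\in\cup\C_G^{(p)}: x^{p^i}=g\}=N_{G_1}(g_1)\,N_{G_2}(g_2).$$
Taking $g=1_G$ in the same formula yields $n_i(\C_G^{(p)})=N_{G_1}(1)\,N_{G_2}(1)$.

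It remains to show that each factor $N_K(g_K)$ is either $0$ or $N_K(1_K)$. If $p\mid\# K$, this is exactly the hypothesis that $\C_K^{(p)}$ is an $\alpha$-set, since then $N_K(1_K)=n_i(\C_K^{(p)})$. If $p\nmid\# K$, then $P_K=\{1_K\}$ and $g_K=1_K$, so $N_K(g_K)=1=N_K(1_K)$. Hence the product $N_{G_1}(g_1)N_{G_2}(g_2)$ is either $0$ or $N_{G_1}(1)N_{G_2}(1)=n_i(\C_G^{(p)})$. This is condition (ii), so $\C_G^{(p)}$ is an $\alpha$-set for every prime $p$ dividing $\# G$, and $G$ is an $\alpha$-group.

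I do not expect a real obstacle. The points that need care are the identification of the $p$-elements of the product with $P_{G_1}\times P_{G_2}$, and the degenerate case where $p$ divides only one of the two orders. In that case Definition~\ref{def:alpha} gives no information about the other factor, so it has to be handled separately as above.
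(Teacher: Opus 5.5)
Your proposal is correct and follows the paper's argument: you identify the $p$-elements of $G_1\times G_2$ with pairs of $p$-elements, and you show that the number of solutions of $x^{p^i}=g$ factors as a product of the coordinatewise counts. You also treat explicitly the case of a prime dividing only one of $\# G_1,\# G_2$, where the other factor is trivially $1$; the paper only says this case ``follows similarly.''
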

\begin{proof}
Let $p$ be a prime dividing both $\# G_1$ and $\# G_2$. It is direct to verify that any $p$-element in $G_1\times G_2$  is of the form $g=(g_1, g_2)$ with $g_j\in \cup \mathcal C_{G_j}^{(p)}$. In particular, for such a $g$, the equation $x^{p^i}=g$ has a solution if and only if the equations $x_j^{p^i}=g_j, j=1, 2$ have solutions. Moreover, in affirmative case, the former equation has $n_i(\mathcal C_{G_1}^{(p)})\cdot n_i(\mathcal C_{G_2}^{(p)})$ solutions in $\cup \mathcal C_{G_1\times G_2}^{(p)}$ since $G_1, G_2$ are $\alpha$-groups. For the primes dividing the order of only one $G_j$, the proof follows similarly.\end{proof}

\begin{definition}
For each $S\subseteq \C_G$, let $\mu_S$ denote the set of all elements of $S$ that are maximal (in $S$) with respect to the inclusion. 
\end{definition}

Before proceeding to the proof of Theorem~\ref{thm:main-alpha}, we need two technical results.

\begin{proposition}\label{betas}
    Let $G$ be a finite group and let $S\subseteq C_G^{(p)}$ be an $\alpha$-set. If $S\setminus \mu_S$ is nonempty, then it is also an $\alpha$-set. In this case, we have $n_{i}(S\setminus \mu_S)=\frac{n_{i+1}(S)}{n_1(S)}$ for every $i\ge 1$.
\end{proposition}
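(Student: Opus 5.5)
The plan is to identify $\cup(S\setminus\mu_S)$ with the image of the $p$-power map on $\cup S$, and then count solutions in $\cup(S\setminus \mu_S)$ by lifting them to $\cup S$ through that map. Set $T:=S\setminus\mu_S$ and assume $T\neq\emptyset$.

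\emph{Step 1: $T$ is hereditary and contained in $\C_G^{(p)}$.} The inclusion $T\subseteq\C_G^{(p)}$ is immediate from $T\subseteq S$. For heredity, if $C\in T$ then $C\subsetneq D$ for some $D\in S$. Any subgroup $C_0\le C$ lies in $S$ (since $S$ is hereditary) and satisfies $C_0\subsetneq D$, so $C_0\notin\mu_S$ and hence $C_0\in T$. In particular, since $T\ne\emptyset$, we get $\{1_G\}\in T$.

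\emph{Step 2 (the key identification):}
$$\cup T=\{y^p : y\in \cup S\}.$$
For the inclusion $\supseteq$, take $y\in\cup S$. If $y^p\neq 1_G$, then $\langle y^p\rangle\subsetneq\langle y\rangle\in S$, because $\langle y\rangle$ is a nontrivial cyclic $p$-group; thus $\langle y^p\rangle\in T$. If $y^p=1_G$, then $y^p\in\cup T$ because $\{1_G\}\in T$. For the inclusion $\subseteq$, take $C\in T$ with $C\subsetneq D\in S$, where $D=\langle y\rangle$ is a cyclic $p$-group. Its proper subgroups are all contained in its unique maximal subgroup $\langle y^p\rangle$. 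Hence every element of $C$ has the form $y^{pk}=(y^k)^p$ with $y^k\in\cup S$. This is the step I expect to need the most care; it uses crucially that $S$ consists of cyclic $p$-groups, where proper subgroups form a chain.

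\emph{Step 3 (counting).} Fix $g\in\cup T$ and $i\ge1$. By Step 2, the map $\pi:\cup S\to\cup T$, $y\mapsto y^p$, is surjective. Since $S$ is an $\alpha$-set, for each $z\in\cup T$ the equation $y^p=z$ has a solution in $\cup S$, so every fiber $\pi^{-1}(z)$ has exactly $n_1(S)$ elements. Moreover, $x\in\cup T$ satisfies $x^{p^i}=g$ if and only if every $y\in\pi^{-1}(x)$ satisfies $y^{p^{i+1}}=g$. Conversely, every $y\in\cup S$ with $y^{p^{i+1}}=g$ maps to such an $x$. Therefore
$$\#\{x\in\cup T: x^{p^i}=g\}=\frac{\#\{y\in\cup S: y^{p^{i+1}}=g\}}{n_1(S)}\in\Bigl\{0,\ \frac{n_{i+1}(S)}{n_1(S)}\Bigr\},$$
where the dichotomy comes from the $\alpha$-property of $S$ applied with exponent $p^{i+1}$.

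\emph{Step 4 (conclusion).} Taking $g=1_G$, the equation has at least the solution $1_G$, so the count is nonzero. This gives $n_i(T)=n_{i+1}(S)/n_1(S)$. Substituting back into Step 3, the number of solutions for any $g\in\cup T$ is either $0$ or $n_i(T)$. Hence $T$ is an $\alpha$-set for the same prime $p$, with the stated formula.
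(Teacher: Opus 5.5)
Your proof is correct and follows the paper's route. Like the paper, you identify $\cup(S\setminus\mu_S)$ with the image of $y\mapsto y^p$ on $\cup S$, then count solutions of $x^{p^i}=g$ by lifting them to solutions of $y^{p^{i+1}}=g$ in $\cup S$, which fall into fibers of size $n_1(S)$. You also spell out two steps the paper leaves implicit: the justification of the identification, and taking $g=1_G$ to obtain $n_i(S\setminus\mu_S)=n_{i+1}(S)/n_1(S)$.
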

\begin{proof}
From hypothesis, $S$ is hereditary. In particular, we easily conclude that $S\setminus \mu_S$ is also hereditary.
Now observe that $h\in\cup (S\setminus\mu_S)$ if and only if there exists $y\in\cup S$ such that $y^p=h$.  In particular, for each $g\in \cup(S\setminus \mu_S)$, the number $N_g$ of solutions to $x^{p^{i}}=g$ with $x\in \cup(S\setminus \mu_S)$ equals the number of distinct values of $y^p$ with $y$ running over all the solutions (over $\cup S$) to the equation $x^{p^{i+1}}=g$. Since $S$ is an $\alpha$-set, this last equation has $0$ or $n_{i+1}(S)$ solutions. In the first case, we conclude that $N_g=0$. In the second case, we use again the fact that $S$ is an $\alpha$-set to conclude that the $n_{i+1}(S)$ solutions $y\in \cup S$ to $x^{p^{i+1}}=g$ can be grouped in subsets of size $n_1(S)$ according to the value of $y^p$. Hence $N_g=\frac{n_{i+1}(S)}{n_1(S)}$, concluding the proof.
\end{proof}

\begin{lemma}\label{lm3}
    Let $S\subseteq \C_G$ be an $\alpha$-set. If $C\in S\setminus \mu_S$, then the number of elements $C_0\in S$ with $C_0=p\cdot \# C$ and $C\subset C_0$ is equal to $\frac{n_1(S)}{p}$ if $C\neq \{1_G\}$ and equal to $\frac{n_1(S)-1}{p-1}$ if $C=\{1_G\}$.   
\end{lemma}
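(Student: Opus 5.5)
Let $C=\langle g\rangle\in S\setminus\mu_S$ with $\#C=p^d$. The plan is to count, in two ways, the set $X_g$ of solutions of $x^p=g$ with $x\in\cup S$. The quantity we want is the number of $C_0\in S$ with $\#C_0=p\cdot\#C$ and $C\subset C_0$.

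\emph{Step 1: $\#X_g=n_1(S)$ when $d\ge 1$.} Since $C\notin\mu_S$, there is $C'\in S$ with $C\subsetneq C'$. Because $S\subseteq\C_G^{(p)}$, the subgroups of $C'$ form a chain. So $C'$ contains a cyclic subgroup $C_0$ of order $p\cdot\#C$ with $C\subset C_0$, and $C_0\in S$ because $S$ is hereditary. A generator $y$ of $C_0$ satisfies $y^p\in C$ and $\langle y^p\rangle=C$, so $y^p=g^k$ with $\gcd(k,p)=1$. Replacing $y$ by $y^{k'}$, where $kk'\equiv 1 \pmod{p^{d+1}}$, gives $y^p=g$. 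Hence the equation $x^p=g$ has a solution in $\cup S$, and the $\alpha$-set property (with $i=1$) gives $\#X_g=n_1(S)$.

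\emph{Step 2: counting by the cyclic subgroup generated.} Any $x\in X_g$ has order $p^{d+1}$ (for $d\ge1$), and $\langle x\rangle$ is a cyclic subgroup $C_0\in S$ of order $p\cdot\#C$ containing $C$. Conversely, fix such a $C_0$ with generator $y$, $\#C_0=p^{d+1}$. The generators of $C_0$ are $y^k$ with $\gcd(k,p)=1$, $0<k<p^{d+1}$. By Step 1 we may normalize $y^p=g$. Then $(y^k)^p=g$ holds if and only if $g^{k-1}=1$, that is, $k\equiv1\pmod{p^d}$. This is exactly the computation in Remark~\ref{rem:approx}, shifted by one level, and it gives precisely $p$ values of $k$. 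Thus $X_g$ is partitioned according to $\langle x\rangle$ into blocks of size exactly $p$, one block for each admissible $C_0$. Hence the number of such $C_0$ is $n_1(S)/p$.

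\emph{Step 3: the case $C=\{1_G\}$.} Now the relevant set is $\{x\in\cup S: x^p=1_G\}$, which has $n_1(S)$ elements by definition. Removing $1_G$ leaves $n_1(S)-1$ elements of order $p$. Each subgroup $C_0\in S$ of order $p$ contains exactly $p-1$ of them, and every such element generates a $C_0\in S$, because $S$ is hereditary and $\cup S$ is a union of members of $S$. This gives $(n_1(S)-1)/(p-1)$.

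The only delicate point is Step 2: making sure that every solution $x\in\cup S$ generates a member of $S$, and that each such $C_0$ contributes exactly $p$ solutions. The first holds because $x\in C''$ for some $C''\in S$, so $\langle x\rangle\in S$ by heredity. The second is the normalization plus the congruence count above.
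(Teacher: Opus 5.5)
Your proof is correct and follows essentially the same route as the paper. You use the $\alpha$-set property to get exactly $n_1(S)$ solutions of $x^p=g$ in $\cup S$. You then split these solutions into blocks of exactly $p$ according to the cyclic subgroup of order $p\cdot\#C$ they generate, and handle the trivial case by counting elements of order $p$; your Step 1 also makes explicit why $g$ has a $p$-th root in $\cup S$, which the paper only asserts.
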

\begin{proof}
We first consider the case $C=\{1_G\}$. Since any two distinct cyclic groups of order $p$ intersect trivially, it follows that $S$ has exactly $\frac{n_1(S)-1}{p-1}$ groups of order $p=p\cdot \# \{1_G\}$. Suppose $C\ne \{1_G\}$, take $\theta\in G$ a generator for $C$ and let $p^k$ be the number of elements of $C$. Since $C\ne \{1_G\}$ and $C$ is not maximal, then $k\ge 1$ and $\theta$ is of the form $g^p$ with $g\in \cup S$. Since $S$ is an $\alpha$-set, the number of elements $x\in \cup S$ such that $x^p=\theta$ equals $n_1(S)$. Now take $g_1, g_2\in S$ with
$$        g_1^p=g_2^p=\theta.
$$ Since $k\ge 1$, it follows that $g_1$ and $g_2$ have order $p^{k+1}$. 
We note that $g_1$ and $g_2$ generate the same cyclic group if and only if there exists $1\le r\le p^{k+1}$ such that $\gcd(p, r)=1$ and \begin{equation}\label{eq2}
        g_1^r=g_2 
    \end{equation} Since $g_1^p=g_2^p=\theta$, Eq.~\eqref{eq2} holds if and only if $\theta^r=\theta$. Since $\theta$ has order $p^k$, the last equality holds if and only if $r\equiv1\mod p^{k}$, that is, $r=sp^k+1$ with $0\leq s\leq p-1$. Now observe that a cyclic subgroup $C_0$ of $S$ of order $p\cdot \# C$ contains $C$ if and only if there exists $g\in C_0$ such that $g^p=\theta$. Therefore, from our previous observation, the number of groups $C_0$ equals $n_1(S)/p$.
 \end{proof}

\begin{remark}\label{rmk1}
    The number of elements of order $p^i$ in an $\alpha$-set $S$ is equal to $n_1(S)$ if $i=1$ and equal to $n_i(S)-n_{i-1}(S)$ if $i>1$.
\end{remark}

\subsection{Proof of Theorem~\ref{thm:main-alpha}}

We start proving a more general result stating that $\la(S)\cong \la(T)$ whenever $S, T$ are $\alpha$-sets with the same order multiset. The proof is by induction and mainly relies on the fact that, for an $\alpha$-set $S$, the graph $\la(S)$ is well structured and can be easily obtained from the graph $\la(S\setminus \mu_S)$; thanks to Proposition~\ref{betas}, $S\setminus \mu_S$ is also an $\alpha$-set.

\begin{lemma}\label{lemabetas}
    Let $p$ be a prime and let $G,H$ be finite groups of order divisible by $p$. If $S\subseteq \C_G^{(p)}$ and $ T\subseteq \C_H^{(p)}$ are $\alpha$-sets such that $\ou(S)=\ou(T)$, then $\la(S)\cong\la(T)$.
\end{lemma}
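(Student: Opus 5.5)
We argue by strong induction on the largest order $p^m$ of an element of $\cup S$; since $\ou(S)=\ou(T)$, this is also the largest order of an element of $\cup T$. First we record what the hypothesis gives. By Remark~\ref{rmk1}, the number of elements of order $p^i$ in an $\alpha$-set determines and is determined by the numbers $n_j$. Hence $\ou(S)=\ou(T)$ is equivalent to $n_i(S)=n_i(T)$ for every $i\ge 1$.

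For the base case $m\le 1$, every nontrivial cyclic subgroup in $S$ has order $p$. By Lemma~\ref{lm3} there are $\frac{n_1(S)-1}{p-1}$ of them, so $\la(S)$ is a star: these vertices of weight $p$ each point to the root $\{1_G\}$ of weight $1$. The same description holds for $T$, and since $n_1(S)=n_1(T)$ the two stars are isomorphic.

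For the inductive step, assume $m\ge 2$, so that $S\setminus\mu_S$ and $T\setminus\mu_T$ are nonempty. By Proposition~\ref{betas} both are $\alpha$-sets, and
$$n_i(S\setminus\mu_S)=\frac{n_{i+1}(S)}{n_1(S)}=\frac{n_{i+1}(T)}{n_1(T)}=n_i(T\setminus\mu_T).$$
By the first paragraph their order multisets therefore coincide. Their maximal element orders are at most $p^{m-1}$, so the induction hypothesis gives an isomorphism $\psi:\la(S\setminus\mu_S)\to\la(T\setminus\mu_T)$ of weighted digraphs.

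The main task is to extend $\psi$ to all of $\la(S)$. We first need a structural observation. Every $C\in\mu_S$ with $\#C=p^k>1$ has a unique maximal proper subgroup $C'$ of order $p^{k-1}$. Because $S$ is hereditary, $C'$ lies in $S$, and $C'\in S\setminus\mu_S$ since it is properly contained in $C$. The out-neighbours of $C$ in $\la(S)$ are exactly the subgroups of $C$, and these are $C'$ together with the subgroups of $C'$. So $\la(S)$ is obtained from $\la(S\setminus\mu_S)$ by attaching, for each $D\in S\setminus\mu_S$, some new vertices $C$ of weight $p\cdot\#D$. Each new vertex points to $D$ and to all out-neighbours of $D$, and the new vertices have no in-edges.

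Next we count the new vertices attached to a given $D$. In $S$, the number of $C_0$ of order $p\cdot\#D$ containing $D$ is given by Lemma~\ref{lm3}: it is $n_1(S)/p$ if $D\neq\{1_G\}$ and $\frac{n_1(S)-1}{p-1}$ if $D=\{1_G\}$. Some of these lie in $S\setminus\mu_S$; they are precisely the in-neighbours of $D$ in $\la(S\setminus\mu_S)$ of weight $p\cdot\#D$. The remaining ones are the attached maximal vertices. The same description holds in $T$, and $\psi$ preserves both the weights and the in-neighbours of weight $p\cdot\#D$. Since $n_1(S)=n_1(T)$, the number of maximal vertices attached to $D$ equals the number attached to $\psi(D)$. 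Choosing any bijection between these attached sets for each $D$ extends $\psi$ to an isomorphism $\la(S)\cong\la(T)$.

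The step needing the most care is the edge check in this extension. A new vertex $C$ is adjacent exactly to $C'$ and to the subgroups of $C'$, so its out-neighbourhood is determined by $C'$ alone. Since $\psi$ already preserves out-neighbourhoods inside $\la(S\setminus\mu_S)$, the extended map preserves all edges.
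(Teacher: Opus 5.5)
Your proof is correct and follows the paper's argument: induction on the exponent, passing to $S\setminus\mu_S$ via Proposition~\ref{betas}, and extending the inductive isomorphism by arbitrary bijections between the sets of maximal vertices attached to each $D\in S\setminus\mu_S$. The only difference is bookkeeping. The paper applies Lemma~\ref{lm3} to both $S$ and $S\setminus\mu_S$, splitting by whether $C\in\mu_{S\setminus\mu_S}$, whereas you subtract the number of in-neighbours of $D$ of weight $p\cdot\#D$ in $\la(S\setminus\mu_S)$, which $\psi$ preserves automatically; this is a slightly cleaner route to the same counts.
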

\begin{proof}
Let $\exp(S)$ (resp. $\exp(T)$) be the largest $m\ge 0$ such that $p^m\in \ou(S)$ (resp. $p^m\in \ou(T)$). From hypothesis, $\exp(S)=\exp(T)$. So we prove our result by taking induction on $n:=\exp(S)$.
The case $n=0$ is straightforward since both $S, T$ comprise only the trivial group. For $n=1$, suppose $\exp(S)=1$ and $\ou(S)=\ou(T)$. In particular, both $S, T$ comprise $n_1(S)-1$ elements of order $p$ and $1$ element of order $1$. Therefore, $\la(S)$ and $\la(T)$ comprise a directed rooted tree with exactly $\frac{n_1(S)-1}{p-1}$ vertices of weight $p$, directed to the root which has weight $1$. In particular, $\la(S)\cong \la(H)$.
    
    Now fix $n\ge 1$ and assume the result holds whenever $\exp(S)=n$. Let $S, T$ be two $\alpha$-sets with $\exp(S)=n+1$ such that $\ou(S)=\ou(T)$. 
    Since $n\ge 1$, the sets  $\s=S\setminus\mu_S$ and $\T=T\setminus\mu_T$ are nonempty and $\exp(\s)=n$. From Proposition~\ref{betas}, $\s$ and $\T$ are $\alpha$-sets. Moreover, since $\ou(S)=\ou(T)$, we obtain $n_i(S)=n_i(T)$ for every $i\ge 1$. The latter combined with Proposition~\ref{betas} yields $n_i(\s)=n_i(\T)$ and then $\ou(\s)=\ou(\T)$ from Remark~\ref{rmk1}.
    
    As $\s, \T$ are $\alpha$-sets, the induction hypothesis entails that $\la(\s)\cong \la(\T)$. Let $f:\s\to \mathcal \T$ be the bijection inducing the isomorphism between these two digraphs.  For each $C\in \s$, let $M_S(C)$ be the set of all elements $D\in \mu_S$ such that $C\subsetneq D$ and there is no cyclic group $E$ with $C\subsetneq E\subsetneq D$ (that is, $C$ is a maximal subgroup of $D\in \mu_S$).
       
\begin{itemize}
     \item It follows by the definition that $M_S(C)$ is precisely the set of  $C_0\in \mu_S$ such that $\# C_0=p\cdot \# C$ and $C\subseteq C_0$.

      \item Since $\mu_S=S\setminus \s$ and both $S, \s$ are $\alpha$-sets, the previous observation combined with Lemma~\ref{lm3} provides

      \begin{equation}\label{ms1}
        \#M_S(\{1_G\})=\frac{n_1(S)-n_1(\s)}{p-1}
\end{equation} (since $n\ge 1$, $\{1_G\}$ is not in $\mu_\s$ or $\mu_S$) and
\begin{equation}\label{eq:magic}\# M_S(C)=\begin{cases}
    \frac{n_1(S)}{p}&\text{ if }C\in\mu_{\s},\\
    \frac{n_1(S)-n_1(\s)}{p} &\text{ if } C\in \s\setminus\mu_\s.
\end{cases}
\end{equation}

     \item From the first observation, any generator $g$ of $C_0\in M_S(C)$ is such that $g^p$ generates $C$. The latter implies that the sets $M_S(C)$ with $C\in \s$ are pairwise disjoint.
   
    \item From construction, $\mu_S=\bigcup_{C\in \s} M_S(C)$ and, for $C\in \s$, $D\in M_S(C)$ and $E\in S$, we have $E\subsetneq D$ if and only if $E\subseteq C$.

\end{itemize}

The observations above lead to the following conclusion: the graph $\la(S)$ is obtained from the graph $\la(\s)$ by adding, for each $C\in \s$, the set of vertices $M_S(C)$ (with weight $p\cdot \# C$) and directed edges $D\to E$ with $D\in M_S(C)$ and $E\in S$ such that $E=C$ or $C\to E$ is an edge in $\la(\s)$. We can argue similarly and obtain the same observations and conclusions for $\T$ and $\la(\T)$. 
With this notation in mind, we observe that $f(\mu_{\s})=\mu_{\T}$. In fact, $\mu_{\s}$ (resp. $\mu_{\T}$) corresponds to the set of vertices in $\la(\s)$ (resp. $\la(\T)$) that do not have edges directed to them. Moreover, since $f$ preserves the weight of the vertices, we obtain $\# f(C)=\# C$  for each $C\in \s$. 
In particular, from the previous description of the graphs $\la(S), \la(T)$ we see that they are isomorphic if, for each $C\in \s$, we have $\#M_S(C)=\#M_T(f(C))$. In fact, in this case, $f$ lifts to an isomorphism between $\la(S)$ and $\la(T)$ by considering, for each $C\in \s$, an arbitrary bijection between the sets $M_S(C)$ and $M_T(f(C))$. Since $f$ is a bijection with $f(\mu_{\s})=\mu_{\T}$, we obtain the equality $f(\s \setminus \mu_{\s})=\T\setminus \mu_{\T}$. Moreover, we clearly have $f(\{1_G\})=\{1_{H}\}$. In particular, considering Eqs.~\eqref{ms1} and~\eqref{eq:magic}, we obtain $\#M_S(C)=\#M_T(f(C))$ if we prove that $n_i(S)=n_i(T)$ and $n_i(\s)=n_i(\T)$ for every $i\ge 1$. The last two equalities hold thanks to Remark~\ref{rmk1} (recall that $\ou(S)=\ou(T)$ and $\ou(\s)=\ou(\T)$). 
\end{proof}

We now finish the proof of Theorem~\ref{thm:main-alpha}: from Theorem~\ref{thm:newmain}, it suffices to verify that $P_1\ne P_2$ and $P_6\le P_3$. Example~\ref{ex:alpha} shows that any group of prime exponent $p$ is an $\alpha$-group. In particular, by taking $p$ an odd prime, and considering the two non isomorphic groups of order $p^3$ and exponent $p$ we obtain that $P_1\ne P_2$ for the class of nilpotent $\alpha$-groups. Now let $G, H$ be nilpotent $\alpha$-groups with $G\sim_6 H$, that is, $\ou(G)=\ou(H)$. In particular, $\ou(\C_G^{(p)})=\ou(\C_H^{(p)})$ for every prime $p\mid\#G$. Since $G, H$ are $\alpha$-groups, it follows that $\C_G^{(p)}$ and $\C_H^{(p)}$ are $\alpha$-sets and then Lemma~\ref{lemabetas} implies $\la(\C_G^{(p)})=\la(\C_H^{(p)})$. Thus $\la(G)\cong \la(H)$ by Proposition~\ref{prop:prime} (recall that $G, H$ are nilpotent). In other words, $G\sim_3 H$.

\section{Conclusion and open problems}
In this paper, we introduced and explored several group invariants that arise from combinatorial objects related to finite groups, comparing the partitions they induce. Most notably, we proved that exactly three of them do not coincide: the isomorphism of groups, the isomorphism of power graphs, and the multiset $\ou(G)$.

In key parts of our proofs, we needed to extend the digraphs $\G$, $\G_t$, and $\la$ to collections of cyclic subgroups of $G$. In particular, the definitions given in Subsection~\ref{subsection:power-closed} can be used to extend the relations $\sim_j$ to arbitrary {\em power closed sets} (which are not groups in general). In this broader setting, we can extend 
the equivalence relations $\sim_j$ with $2\le j\le 6$ and it is not difficult to verify that our main theorems still hold. 

To this end, we propose some open problems. The first one seeks to decide whether Theorem~\ref{thm:newmain} holds for the class of all finite groups.

\begin{problem}
    Prove or disprove: there exist finite groups $G, H$ such that $G\sim_5 H$ but $G\not\sim_2 H$.
\end{problem}

Related to the latter, the following problem seeks to identify further relations that could lie strictly between those we have defined.




\begin{problem}\label{prob:1}
   Provide additional group invariants and determine their position relative to those presented in this paper. In other words, identify relations $\sim$ that generate a partition $P$ on the set of finite groups such that $P_1 < P < P_2$, $P_2<P<P_5$ or $P_5 < P < P_6$.
\end{problem}

It is likely that any new invariant will yield classes of finite groups with special properties, such as the $\alpha$-groups introduced in this paper. A  related question to Problem~\ref{prob:1} is the following.

\begin{problem}
Determine the pairs of non isomorphic finite groups $(G, H)$ with $G\sim_2H$. 
\end{problem}

It follows from the definition that any finite group of prime exponent is a nilpotent $\alpha$-group. In particular, by Theorem~\ref{thm:main-alpha}, any two non-isomorphic groups of prime exponent $p$ and of the same order are equivalent under $\sim_2$. In our research, we did not find any additional classes (apart from taking suitable products of groups) where this phenomenon occurs.

Finally, motivated by Theorem~\ref{thm:main-alpha}, we have the following problem.

\begin{problem}
    Provide further classes of groups for which Theorem~\ref{thm:main} can be strengthened.  
\end{problem}

A further investigation of the $\alpha$-groups, considering both algebraic and combinatorial aspects, would also be of interest. In particular, finite $p$-groups satisfying the conditions in Lemma~\ref{lem:alpha-ex} have been examined previously in~\cite{Xu}.

\section*{Acknowledgements}
We thank the anonymous reviewer for helpful suggestions that enhanced the presentation of this work. A. Fernandes was supported by CNPq Grant 163279/2021-7 and partially supported by FAPESP (Brazil) grant 2025/14775-1. L. Reis was supported by CAPES (Brazil) - Finance Code 001, CNPq Grant 310583/2025-0 and FAPEMIG  grant APQ-01712-23. S. Ribas was supported by FAPEMIG grants RED-00133-21 and APQ-01712-23. C. Qureshi, L. Reis and S. Ribas were supported by CNPq grant 420721/2025-8.


\end{document}